\numberwithin{equation}{section}
\newcommand{\R}{{\mathbb R}} 
\theoremstyle{plain}
\newtheorem{theorem}{Theorem}[section]
\newtheorem{remark}[theorem]{Remark}
\providecommand{\bysame}{\makebox[3em]{\hrulefill}\thinspace}
\begin{document}

\setcounter{equation}{0}










\title[Aspherical lens design and Imaging]
{Aspherical lens design and Imaging}
\author[C. E. Guti\'errez and A. Sabra]
{Cristian E. Guti\'errez\\
 and \\
Ahmad Sabra}
\thanks{\today. Authors partially supported by NSF grant DMS-1201401.
The results in this paper were announced in the Freeform Optics Meeting, Arlington, VA, June 2015, see\cite{arlingtonannouncement:gutierrez-sabra}.}
\thanks{AMS Subject Classification 35F50, 35Q60, 78A05, 78A46. Key words: lens design, pdes, inverse problems, geometric optics.}
\address{Department of Mathematics\\Temple University\\Philadelphia, PA 19122}
\email{gutierre@temple.edu}
\address{Current address of A. Sabra: Faculty of Mathematics, Informatics, and Mechanics,
University of Warsaw, Poland}
\email{sabra@mimuw.edu.pl}
\begin{abstract}
We design freeform lenses refracting an arbitrarily given incident field into a given fixed direction. In the near field case, we study the existence of lenses refracting a given bright object into a predefined image. We also analyze the existence of systems of mirrors that solve the near field and the far field problems for reflection.
\end{abstract}

\maketitle


\setcounter{equation}{0}

\section{Introduction} 


The goal of this paper is to show the existence of a lens that refracts a given variable unit field of directions, emitted from a planar source, into a predefined constant direction when the face closer to the source is given. 
We assume the lens is made of a material denser than the exterior medium. In order to avoid total reflection, we require the lower face of the lens, the incident field, and the predefined direction to satisfy the compatibility condition described in \eqref{eq:conditionforrefraction}. We use Snell's law of refraction in vector form, which after analysis, reduces the problem to solving a system of first order partial differential equations. Assuming \eqref{eq:conditionforrefraction}, we prove that a solution exists if and only if the incident field satisfies a curl condition, see \eqref{eq:curlzerocondition}. To minimize internal reflection, the first face of the lens can be chosen to be orthogonal to the incident field; this is described in Example \ref{sec:avoidreflection}.


Using the far field analysis, we solve the following imaging problem in the near field case. Assume $T$ is a bijective transformation between the planar source $\Omega$ and a plane domain $\Omega^*$. The goal is to find a lens -both faces to be determined- that refracts collimated radiation emitted from $\Omega$ into $\Omega^*$. The lower face of the lens satisfies a system of nonlinear partial differential equations \eqref{eq:secondformpde}. We find conditions on $T$ so that a solution to the corresponding PDE exists. Once the lower face is found, the second face can be constructed using the results of Section \ref{sec:verticalcase}. We use this analysis to design lenses when the image is a magnification/contraction of the planar source, see Sections \ref{sec:Magnification} and \ref{sec:legendretransform}. 
Adapting the methods from Sections \ref{sec:farfield} and \ref{sec:nearfield}, we solve in Section \ref{sec:reflectioncase} the far field and the near field problems for reflection. In this case, the lens is replaced by a system of two mirrors.

We remark that the refractive/reflective surfaces considered are not necessarily rotationally symmetric, and the solutions found are explicit. This permits us to calculate and simulate numerically examples of application.

To place our results in perspective, both from the theoretical and practical points of view, we mention the following. 
The problem of finding a convex, analytic, and symmetric lens focusing all rays from a point source into a point image was first solved in \cite{friedmanmacleod:optimaldesignopticalens} in 2d using a fixed point type argument. 
This result is generalized in \cite{gutierrez:asphericallensdesign} to 3d to construct freeform $C^2$ lenses that refract rays emitted from a point source into a constant direction or a point image; the reflection case is studied in \cite{gutierrez-sabra:designofpairsofreflectors}. 
Illumination problems with one point source are studied in \cite{gutierrez-huang:nearfieldrefractor}, \cite{gutierrez-mawi:refractorwithlossofenergy} for the refraction case and in
\cite{Caf-K-Oli:antenna}, \cite{Wang:antenna}, \cite{oliker-kochengin:nearfieldantenna}, \cite{gutierrez-sabra:thereflectorproblemandtheinversesquarelaw} for reflection. The case when the input rays are collimated is considered in 
 \cite{gutierrez-tournier:parallelrefractor}, \cite{gutierrez-tournier:nearfieldrefraction}.

The use and design of free form surfaces in modern optics received great attention recently due to applications to illumination, imaging, aerospace and biomedical engineering.
For applications to illumination problems see \cite{Ding:FreeFormLED} and \cite{Oliker:SupportingOpticalFreeFormIllumination}, for aerospace engineering see \cite{Saunder:MetrologyFreeFormSCUBA}; and for automobile applications such as head-up displays and car driver-side mirrors design see \cite{Ott:HUDfreeform} and \cite{hick-perline:drivermirror}, respectively.
In addition, see \cite{Fang:ManufatAndMeasFreeForm} for an extended number of related references.
Due to recent technological advancements in ultra precision cutting, grinding, and polishing machines,
manufacturing free form optical devices with high precision is now possible, see \cite{Bialock:FabricFreeForm}. The systems obtained enhance the performance of traditional designs and provide more flexibility for designers \cite{Duerr:BenefitsFreeFormOptics}. 
Moreover, they can achieve imaging tasks that are impossible with the symmetric designs. For example, 
in \cite{Hick:CoupledFreeFormSurface} and \cite{Croke:FourmirrosFreeform}, the authors construct multiple free form mirror systems that increase the field of view of an observer and improve visual performance.  

The free form surfaces designed in this paper are a one parameter family given parametrically and therefore they may have self intersections. To obtain physically realizable surfaces when the input and output fields are collimated see Remark \ref{rmk:self-interesection}; the general case will be analyzed in the forthcoming paper \cite{gutierrez-sabra:farfield-with-two-surfaces}. 
In the present paper we do not consider input and output radiation intensities which requires other mathematical methods. For example, when $\sigma_1$ is optically inactive, this leads to solving Monge-Amp\`ere type equations, see for example \cite[Sect. 7.7]{winston-minano-benitez:nonimagingoptics}, \cite{ries-muschaweck:tailoredopticalsurfaces}, and \cite{Castro2015}. However, the results obtained here will be used in \cite{gutierrez-sabra:farfield-with-two-surfaces} to design lenses, with both faces optically active, and where the input and output energies are taken into consideration.


The paper is organized as follows. In Section \ref{sec:snelllaw}, we briefly introduce the Snell's law of refraction in vector form. In Section \ref{sec:farfield}, we precisely state and solve the far field problem. 
We introduce  in Section \ref{sec:nearfield} the near field imaging problem and formulate the corresponding system of PDEs \eqref{eq:secondformpde}. 
We find, in Section \ref{sec:sameindex}, necessary and sufficient conditions for the imaging map $T$  to solve the system of PDEs when both the source and the target are in media with same refractive index, Theorem \ref{thm:resultwhensameindex}. With the explicit formulas obtained, we next study in Section \ref{sec:Magnification} the case when the map is a magnification and analyze the thickness of the lens obtained. In Sections \ref{sec:n3smaller} and \ref{sec:n3larger}, we solve the imaging problem when the target and the source are in media with different refractive indices. In this case, we find sufficient conditions for existence of local solutions, see Theorem \ref{eq:existensolutionpde} and Section \ref{sec:n3larger}. The same problem is solved in Section \ref{sec:legendretransform}  in the plane with a different method using the Legendre transform as in \cite{gutierrez:refractionlegtransform} and when the map is a magnification. Finally, in Section \ref{sec:reflectioncase}, we solve the far field and the imaging problem for reflectors.



\setcounter{equation}{0}

\section{Snell's law of refraction in vector form}\label{sec:snelllaw}
Suppose $\Gamma$ is a surface in $\R^3$ that separates two media
I and II that are homogeneous and isotropic. Let $v_1$ and
$v_2$ be the velocities of propagation of light in the media I and
II respectively. The index of refraction of the medium I is by
definition $n_1=c/v_1$, where $c$ is the velocity of propagation
of light in vacuum, and similarly the refractive index of the medium II is $n_2=c/v_2$. If a
ray of light\footnote{Since the refraction angle depends on the frequency of the radiation, we assume light rays are monochromatic.} having direction $x\in S^{2}$, the unit sphere in $\R^{3}$, and traveling
through the medium I strikes $\Gamma$ at the point $P$, then this ray
is refracted into the direction $m\in S^{2}$ through the medium II
according to the Snell law in vector form:
\begin{equation}\label{eq:snellwithcrossproduct}
n_{1}(x\times \nu)=n_{2}(m\times \nu),
\end{equation} 
where $\nu$ is the unit normal to the surface to $\Gamma$ at $P$ pointing towards medium II; see \cite[Subsection 4.1]{luneburg:maththeoryofoptics}. It is tacitly assumed here that $x\cdot \nu \geq 0$.
We deduce from \eqref{eq:snellwithcrossproduct} the following
\begin{enumerate}
\item[(a)] the vectors $x,m,\nu$ are all on the same plane (called plane of incidence);
\item[(b)] the well known Snell's law in scalar form
$$n_1\sin \theta_1= n_2\sin
\theta_2,$$ 
where $\theta_1$ is the angle between $x$ and $\nu$
(the angle of incidence),
$\theta_2$ the angle between $m$ and $\nu$ (the angle of refraction).
\end{enumerate}
From \eqref{eq:snellwithcrossproduct}, $(n_{1}x-n_{2}m)\times \nu=0$, which means that the
vector $n_{1}x-n_{2}m$ is parallel to the normal vector $\nu$.
If we set $\kappa=n_2/n_1$, then
\begin{equation}\label{eq:snellvectorform}
x-\kappa \,m =\lambda \nu,
\end{equation}
for some $\lambda\in \R$. Notice that \eqref{eq:snellvectorform} determines $\lambda$. In fact, taking dot product with $\nu$ we get
$\lambda=\cos \theta_1-\kappa \cos \theta_2$,
$\cos \theta_1=x\cdot \nu\geq 0$, and
$\cos \theta_2=m\cdot \nu=\sqrt{1-\kappa^{-2}[1-(x\cdot \nu)^2]}$,
so
\begin{equation}\label{eq:formulaforlambda}
\lambda=x\cdot \nu -\kappa \,\sqrt{1-\kappa^{-2}\left(1-(x\cdot \nu)^{2}\right)}.
\end{equation}
Refraction behaves differently for $\kappa<1$ and for $\kappa>1$.
Indeed, by \cite[Subsection 2.1]{gutierrez-huang:farfieldrefractor}
\begin{enumerate}
\item[(C1)]\label{item:conditionkappa<1} if $\kappa<1$ then for refraction to occur we need $x\cdot \nu \geq \sqrt{1-\kappa^2}$, and in \eqref{eq:formulaforlambda} we have $\lambda>0$, and the refracted vector $m$ is so that $x\cdot m\geq \kappa$;
\item[(C2)]\label{item:conditionkappa>1} if $\kappa>1$ then refraction always occurs and we have $x\cdot m\geq 1/\kappa$, and in \eqref{eq:formulaforlambda} we have $\lambda<0$.
\end{enumerate}


\setcounter{equation}{0}
\section{Refraction of a general field $e(x)$}\label{sec:farfield}

We begin stating the problem to be solved.
We are given a $C^2$ surface $\sigma_1$ in $\R^3$ (the bottom face of the lens), and a differentiable unit field $e(x)=(e_1(x),e_2(x),e_3(x)):=(e'(x),e_3(x))$, with $e_3(x)>0$, that is defined for every $x=(x_1,x_2)$ in a plane domain $\Omega$. 
We want to construct a second surface $\sigma_2$ (the top face of the lens) so that each ray emitted from $(x,0)$ with direction $e(x)$ is refracted by the lens sandwiched by $\sigma_1$ and $\sigma_2$ into a given fixed unit direction $w$. We assume that the medium below $\sigma_1$ has refractive index $n_1$, the medium above $\sigma_2$ has refractive index $n_3$, and the material enclosed within the lens has refractive $n_2$, such that $n_2>n_1, n_3$, but $n_1$ and $n_3$ are unrelated; see Figure \ref{fig:picoflensgeneralfield}. 

%

Define $\kappa_1=n_2/n_1, \kappa_2=n_3/n_2$, and assume that the lower surface $\sigma_1$ of the lens is given by the graph of a $C^2$ function $u$. A ray emanating from each point $(x,0)$, $x\in \Omega$, with unit direction $e(x)$ strikes $\sigma_1$ at some point $P(x)=(\varphi(x),u(\varphi(x)))$; we assume that the map $x\mapsto \varphi(x)$ is $C^1$.

At $P(x)$, let $\nu_1(x)$ be the unit normal to $\sigma_1$ pointing towards medium $n_2$. We will assume that $e(x)\cdot \nu_1(x)\geq 0$.  Since $\kappa_1>1$, by $(C2)$ the ray is refracted at $P(x)$ into the lens with unit direction $m_1(x)$ given from the vectorial Snell's law \eqref{eq:snellvectorform} by
\begin{equation}\label{eq:formulaform1}
e(x)-\kappa_1 m_1(x)=\lambda_1 \nu_1(x),
\end{equation}
where $\lambda_1(x)=e(x)\cdot \nu_1(x) -\kappa_1\sqrt{1-\kappa_1^{-2}\left(1-(e(x)\cdot \nu_1(x)\right)^2)}<0$. 
The ray then continues traveling inside the lens and strikes the yet unknown surface $\sigma_2$ at some point $Q(x)$. 
At $Q(x)$, the incident ray has direction $m_1(x)$ and we want it to be refracted into the direction $w$ in medium $n_3$.

\begin{figure}
\includegraphics[width=3in]{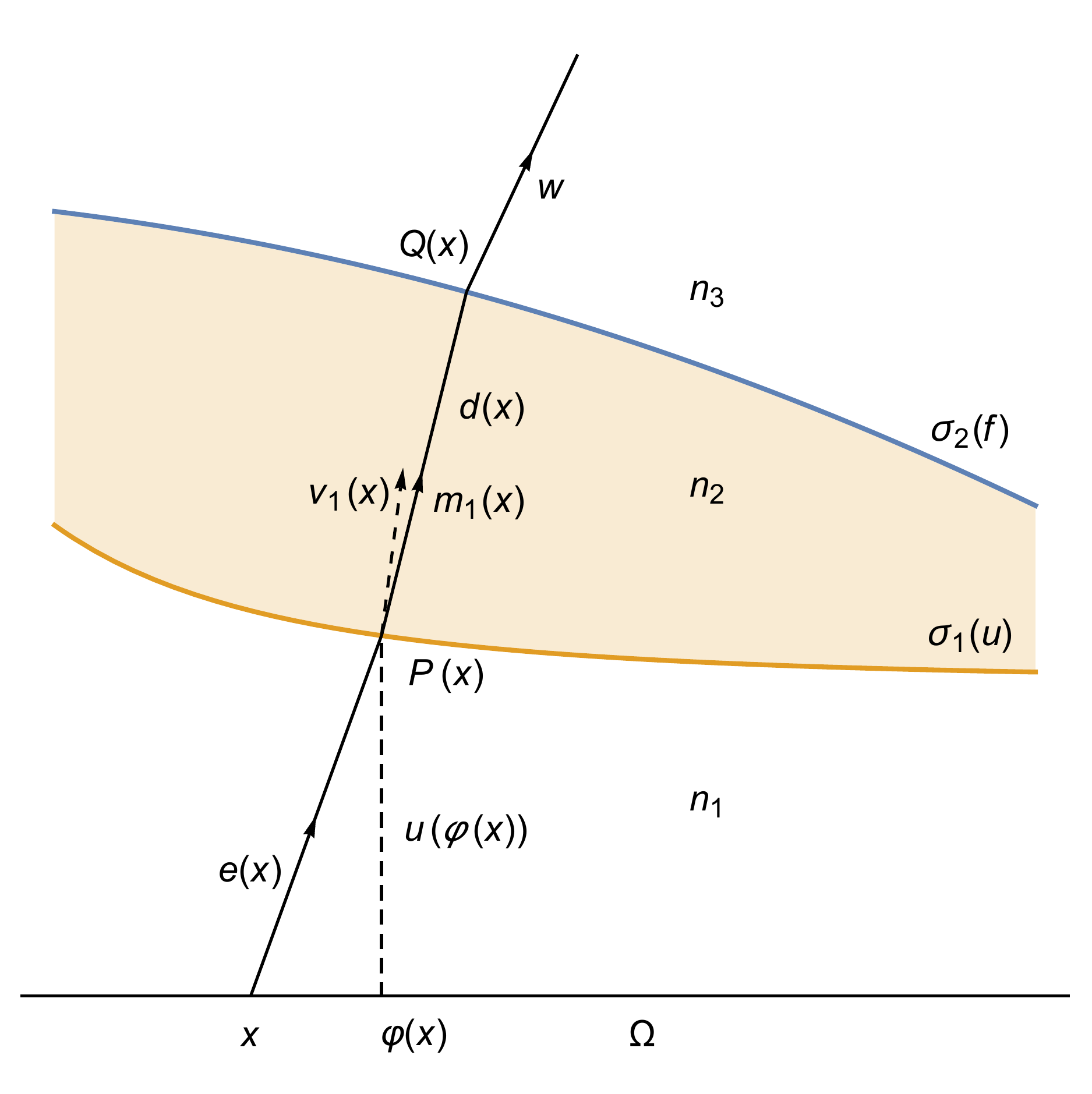}
\caption{Far field model}
\label{fig:picoflensgeneralfield}
\end{figure}

We introduce the distance 
$$d(x)=|P(x)-Q(x)|,$$
and parametrize the unknown surface $\sigma_2$ by the vector 
\begin{equation}\label{eq:formulafortherightcurveefield}
f(x):=(\varphi(x),u(\varphi(x)))+ d(x)\,m_1(x),\quad x=(x_1,x_2)\in \Omega,\qquad \varphi(x)=(\varphi_1(x),\varphi_2(x)).
\end{equation}

The purpose of this section is to construct a family of surfaces $\sigma_2$ by proving the following existence theorem.
\begin{theorem}\label{thm:existenceofupperlenssurfaces}
We are given a $C^2$ surface $\sigma_1$ parametrized by $(\varphi(x),u(\varphi(x)))$, with $x\mapsto \varphi(x)$ $C^1$,  
a $C^1$ unit field $e(x)=(e'(x),e_3(x))$, and a unit direction $w$.
A surface $\sigma_2\in C^2$ such that the lens $(\sigma_1,\sigma_2)$ refracts all rays with direction $e(x)$, $x\in \Omega$, into $w$ exists if and only if $m_1(x)\cdot w\geq \kappa_2$ for all $x\in \Omega$ and ${\rm curl} \,e'(x)=0$.
Moreover, if $\nabla h(x) =e'(x)$ for some $h$, $\sigma_2$ is then parametrized by 
$f(x)=(\varphi(x),u(\varphi(x)))+d(x,C,w)\,m_1(x)$ where $m_1$ is given by \eqref{eq:formula2form1} and $d$ by \eqref{eq:formulaford(x)forgeneralfield}.
$C$ is a constant chosen so that $d>0$.
\end{theorem}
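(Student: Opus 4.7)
\emph{Proof strategy.} The plan is to treat $\sigma_2$ as parametrized by $f(x)=P(x)+d(x)m_1(x)$ with unknown distance $d(x)>0$, and to translate the Snell condition at the unknown face $\sigma_2$ into a first-order system of PDEs for $d$. Since $n_2>n_3$ we have $\kappa_2<1$, so condition (C1) of Section \ref{sec:snelllaw} immediately forces the necessity of $m_1(x)\cdot w\ge\kappa_2$ for every $x\in\Omega$. Snell's law in vector form at $\sigma_2$ says that $m_1-\kappa_2 w$ is parallel to the unit normal $\nu_2$ to $\sigma_2$ at $f(x)$, and since $\nu_2\parallel f_{x_1}\times f_{x_2}$, this is equivalent to the two scalar equations $(m_1-\kappa_2 w)\cdot f_{x_i}=0$ for $i=1,2$.

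Expanding $f_{x_i}=P_{x_i}+d_{x_i}m_1+d\,(m_1)_{x_i}$, using $|m_1|=1$, and noting that $P_{x_i}$ is tangent to $\sigma_1$---so by \eqref{eq:formulaform1} one has $\nu_1\cdot P_{x_i}=0$ and therefore $m_1\cdot P_{x_i}=\kappa_1^{-1}\,e\cdot P_{x_i}$---a short algebraic manipulation, in which $d\,(m_1)_{x_i}$ is rewritten as $f_{x_i}-P_{x_i}-d_{x_i}m_1$ to absorb the term carrying $d$, collapses each equation to the appealingly simple form
\[d_{x_i}-\kappa_2\, w\cdot f_{x_i}=-\kappa_1^{-1}\,e\cdot P_{x_i},\qquad i=1,2.\]
Next I would exploit the geometric identity $P(x)=(x,0)+t(x)e(x)$, where $t(x)>0$ is the distance travelled by the incoming ray from the source to $\sigma_1$. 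Differentiating and using $e\cdot e_{x_i}=0$ (since $|e|=1$) gives $e\cdot P_{x_i}=e_i+t_{x_i}$, so the system above rewrites as the exact-form equation $\partial_{x_i}\bigl(d-\kappa_2 w\cdot f+\kappa_1^{-1}t\bigr)=-\kappa_1^{-1}e_i$.

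The integrability of this equation is precisely $\mathrm{curl}\,e'=0$, which proves the necessity of the curl hypothesis. Conversely, when $e'=\nabla h$, a single integration yields $d-\kappa_2 w\cdot f+\kappa_1^{-1}t=-\kappa_1^{-1}h+C$; substituting $w\cdot f=w\cdot P+d\,(w\cdot m_1)$ and solving algebraically for $d$ produces the closed-form expression
\[d(x)=\frac{\kappa_2\,w\cdot P(x)-\kappa_1^{-1}\bigl(t(x)+h(x)\bigr)+C}{1-\kappa_2\,w\cdot m_1(x)}.\]
Because $|m_1|=|w|=1$ and $\kappa_2<1$, the denominator is strictly positive, and choosing $C$ sufficiently large one obtains $d>0$, which completes the sufficiency direction. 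The main obstacle in this plan is the algebraic reduction that produces the simplified first-order system together with the identification $e\cdot P_{x_i}=e_i+t_{x_i}$; once this identity is available, the curl condition is forced by exact-form integrability and the formula for $d$ follows by a single quadrature, so no nonlinear PDE machinery is required.
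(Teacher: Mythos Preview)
Your proposal is correct and follows essentially the same route as the paper: derive the orthogonality conditions $(m_1-\kappa_2 w)\cdot f_{x_i}=0$ from Snell's law at $\sigma_2$, reduce them to a first-order system for $d$ using $|m_1|=1$ and $\nu_1\cdot P_{x_i}=0$, then invoke the incoming-ray parametrization $P=(x,0)+t\,e$ to recognize the system as an exact-form equation whose integrability is precisely $\mathrm{curl}\,e'=0$. Your presentation is in fact a bit cleaner than the paper's, since you work with $m_1$ directly rather than with $e-\lambda_1\nu_1$ and you keep $\kappa_2\,w\cdot f_{x_i}$ as a total derivative from the outset; your final formula for $d$ agrees with the paper's \eqref{eq:formulaford(x)forgeneralfield} after renaming the integration constant. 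The only point you omit is the verification that $\sigma_2$ is actually $C^2$ (not just $C^1$), which the paper handles by checking that $\nu_2=(m_1-\kappa_2 w)/|m_1-\kappa_2 w|$ is $C^1$; this is immediate from $m_1\in C^1$ and $\lambda_2>0$.
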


\begin{proof}
By \eqref{eq:formulaform1}
\begin{equation}\label{eq:formula2form1}
m_1(x)=\dfrac{1}{\kappa_1}(e(x)-\lambda_1(x)\nu_1(x)).
\end{equation}
Since $\kappa_2<1$, from $(C1)$, for refraction to occur at $Q(x)$ and to avoid total reflection, we need $m_1\cdot w\geq \kappa_2$, which from \eqref{eq:formula2form1} is equivalent to
\begin{equation}\label{eq:conditionforrefraction}
\lambda_1(x)\nu_1(x)\cdot w \leq e(x)\cdot w-\kappa_1\kappa_2,
\end{equation}
for all $x\in \Omega$. This is a compatibility condition between $e$, $\sigma_1$, and $w$.

Now, applying the Snell's law at $Q(x)$ we have that
\begin{equation}\label{eq:refractionatsigma2}
 m_1-\kappa_2 w =\lambda_2\nu_2,
\end{equation}
where $\nu_2$ is the unit normal to $\sigma_2$ at $Q(x)$ pointing towards medium $n_3$.

From \eqref{eq:formulafortherightcurveefield}, to find $f(x)$ it only remains to calculate $d(x)$. We note,  from \eqref{eq:formula2form1} and \eqref{eq:refractionatsigma2}, that the vector $e(x)-\lambda_1\nu_1-\kappa_1\kappa_2w$ is a multiple of the normal $\nu_2$. Therefore this vector must be perpendicular to the tangent plane to $\sigma_2$, which is equivalent to the following system of first order partial differential equations
\begin{align}
\left(e(x)-\lambda_1\nu_1-\kappa_1\kappa_2w\right)\cdot f_{x_1}&=0 \label{eq1}\\
\left(e(x)-\lambda_1\nu_1-\kappa_1\kappa_2w\right)\cdot f_{x_2}&=0\label{eq2}.
\end{align}



To carry out the calculations, we will assume  that $d(x)$ is $C^1$. Later on, we will show that this is a consequence of the assumptions on $u$ and $e$.
In fact, by \eqref{eq:formula2form1}, we have that
\[
f_{x_1}(x)=\left(\varphi(x),u(\varphi(x))\right)_{x_1}+ \left[ d(x)\, \dfrac{1}{\kappa_1}(e(x)-\lambda_1 \,\nu_1) \right]_{x_1}.
\]
Hence from \eqref{eq1}
\begin{align*}
0&= \left(e(x)-\lambda_1\,\nu_1 -\kappa_1\kappa_2 \,w \right)\cdot \left[ \left(\varphi(x),u(\varphi(x))\right)_{x_1}+ \left[ d(x)\, \dfrac{1}{\kappa_1}(e(x)-\lambda_1 \,\nu_1) \right]_{x_1}\right]\\
&=
\left(e(x)-\lambda_1\,\nu_1 -\kappa_1\kappa_2 \,w\right])\cdot  \left(\varphi(x),u(\varphi(x))\right)_{x_1}
+ (e(x)-\lambda_1\,\nu_1)\cdot \left[ d(x) \dfrac{1}{\kappa_1}(e(x)-\lambda_1 \,\nu_1) \right]_{x_1}\\
&\qquad \qquad- \kappa_1\kappa_2 \,w\cdot \left[ d(x) \dfrac{1}{\kappa_1}(e(x)-\lambda_1 \,\nu_1) \right]_{x_1}\\
&= F(x) + A(x) -B(x).
\end{align*}
We calculate $A(x)$. We have that $(e(x)-\lambda_1\,\nu_1)\cdot (e(x)-\lambda_1\,\nu_1)=|e(x)-\lambda_1\,\nu_1|^2=\kappa_1^2|m_1|^2=\left(\kappa_1\right)^2$, and therefore
$(e(x)-\lambda_1\,\nu_1)\cdot (e(x)-\lambda_1\,\nu_1)_{x_1}=0$.
So 
$
A(x)=\kappa_1\,d_{x_1}(x)$.

Also, since $\nu_1(x)$ is the normal to $\sigma_1$ at the point $(\varphi(x),u(\varphi(x)))$, 
we have that $\nu_1\cdot \left(\varphi(x),u(\varphi(x))\right)_{x_1}=0$, and hence
$
F(x)= \left(e(x)-\kappa_1\kappa_2\,w\right)\cdot \left(\varphi(x),u(\varphi(x))\right)_{x_1}$.
We obtain that $d$ satisfies the following pde
\[
\kappa_1\,d_{x_1}- \kappa_2\,w\cdot \left[(e(x)-\lambda_1 \,\nu_1) d(x)\right]_{x_1}=
-\left(e(x)-\kappa_1\kappa_2 \,w\right)\cdot  \left(\varphi(x),u(\varphi(x))\right)_{x_1} 
\]
which, since $w$ is a constant vector, can be written as 
\begin{equation}\label{eq:odeforfarfield}
\left[\left( \kappa_1- \kappa_2\,w\cdot (e(x)-\lambda_1 \,\nu_1) \right)d(x)\right]_{x_1}
=
-\left(e(x)-\kappa_1\kappa_2 \,w\right)\cdot  \left(\varphi(x),u(\varphi(x))\right)_{x_1}.
\end{equation}
Analogously, using \eqref{eq2} we obtain that $d$ also satisfies the equation 
\begin{equation}\label{eq:odeforfarfieldv}
\left[\left( \kappa_1- \kappa_2\,w\cdot (e(x)-\lambda_1 \,\nu_1) \right)d(x)\right]_{x_2}
=
-\left(e(x)-\kappa_1\kappa_2 \,w\right)\cdot  \left(\varphi(x),u(\varphi(x))\right)_{x_2}.
\end{equation}
Define 
\begin{align*}
H(x)&=\left(\kappa_1- \kappa_2\,w\cdot (e(x)-\lambda_1 \,\nu_1) \right)d(x)\\
G(x)&= \left(e(x)-\kappa_1\kappa_2 \,w\right)\cdot  \left(\varphi(x),u(\varphi(x))\right).
\end{align*}
Notice that since $u\in C^2$, then $\nu_1\in C^1$, and since $e,\varphi \in C^1$ and we assume $d\in C^1$, we have that $H$ and $G$ are both $C^1$.
We have that
\begin{align*}
\left(e(x)-\kappa_1\kappa_2 \,w\right)\cdot  (\varphi(x),u(\varphi(x)))_{x_i}
=G_{x_i}(x) - e_{x_i}(x)\cdot \left(\varphi(x),u(\varphi(x))\right)
\end{align*}
The ray with direction $e(x)$ strikes the surface $\sigma_1$ at the point  $(\varphi (x),u(\varphi(x)))$, then
\[
\left(\varphi(x),u(\varphi(x))\right)=\rho(x) \,e(x) +(x,0)
\]
for some positive function $\rho$. 
 Since $|e(x)|=1$, then $e(x)\cdot e_{x_i}(x)=0$, for $i=1,2$, and
therefore 
\begin{equation}\label{eq:formulaforexidotz(x),u(z(x))}
e_{x_i}\cdot (\varphi(x),u(\varphi(x)))=(x,0)\cdot e_{x_i}(x);\qquad i=1,2.
\end{equation}
Hence
\begin{align*}
\left(e(x)-\kappa_1\kappa_2 \,w\right)\cdot  \left(\varphi(x),u(\varphi(x))\right)_{x_i}
&=G_{x_i}(x) - e_{x_i}(x)\cdot (x,0)\\
&=G_{x_i}(x) - [e(x)\cdot (x,0)]_{x_i} + e(x)\cdot (x,0)_{x_i}.
\end{align*}
Therefore \eqref{eq:odeforfarfield} and \eqref{eq:odeforfarfieldv} become
\begin{align*}
H_{x_1}(x)&=-G_{x_1}(x)+ [e(x)\cdot (x,0)]_{x_1} - e(x)\cdot (x,0)_{x_1}\\
H_{x_2}(x)&=-G_{x_2}(x)+ [e(x)\cdot (x,0)]_{x_2} - e(x)\cdot (x,0)_{x_2},
\end{align*} 
that is,
\begin{align}
[H+G-e(x)\cdot (x,0)]_{x_1}(x_1,x_2)&=-e_1(x_1,x_2)\label{eq:firstequationcurl}\\
[H+G-e(x)\cdot (x,0)]_{x_2}(x_1,x_2)&=-e_2(x_1,x_2)\label{eq:secondequationcurl}.
\end{align}
Since $e\in C^1$, this implies that the mixed derivatives of $H+G-e(x)\cdot (x,0)$ exist and are continuous.
From \cite[Theorem 9.41]{rudinbook:principlesofmathanalysis}, these mixed derivatives are equal.
This implies that
\begin{equation}\label{eq:curlzerocondition}
\text{curl } (e'(x))= \left(e_2(x)\right)_{x_1}-\left(e_1(x)\right)_{x_2}=0,
\end{equation}
with $e'(x)=(e_1(x),e_2(x))$, where we use the 2 dimensional definition of curl.
Therefore there is a scalar function $h(x_1,x_2)$ such that 
\[
\nabla h(x_1,x_2)=e'(x_1,x_2),
\] 
($\Omega$ simply connected).
Then \eqref{eq:firstequationcurl} implies that
\[
H+G-e(x)\cdot (x,0)+h(x)=\psi(x_2),
\]
and from \eqref{eq:secondequationcurl} we get that $\psi'(x_2)=0$, and so
\[
H+G-e(x)\cdot (x,0)+h(x)=C,
\]
with $C$ a constant. Hence  
\begin{equation}\label{eq:formulaford(x)forgeneralfield}
d(x)=d(x,C,w):=\dfrac{C-h(x)+e(x)\cdot (x,0)-\left[e(x)-\kappa_1\kappa_2 \,w\right]\cdot  \left(\varphi(x),u(\varphi(x))\right)}{ \kappa_1- \kappa_2\,w\cdot (e(x)-\lambda_1 \,\nu_1(x)) }.
\end{equation}
Notice that $\kappa_1- \kappa_2\,w\cdot (e(x)-\lambda_1 \,\nu_1)= \kappa_1-\kappa_1\kappa_2 m_{1}\cdot w\geq \kappa_1-\kappa_1\kappa_2>0$, since $n_{2}>n_{3}$. $d(x,C,w)$ represents the thickness of the lens along the direction $m_1(x)$. We obtained then a family of lenses with thickness depending on the choice of the constant $C$. This is studied more explicitly for magnifying lenses in Section \ref{sec:Magnification}.

At this point we have proved that if $f$ is given by \eqref{eq:formulafortherightcurveefield}, with $d\in C^1$, and satisfying 
the pdes \eqref{eq1} and \eqref{eq2}, then $\text{curl } (e_1(x_1,x_2),e_2(x_1,x_2))=0$, and $d$ is given by the formula \eqref{eq:formulaford(x)forgeneralfield} with $\nabla h=(e_1,e_2)$.
Reciprocally, if $u\in C^2$, $\varphi,e\in C^1$, with $\text{curl }(e_1(x),e_2(x))=0$ in $\Omega$ simply connected,
and $d$ is given by \eqref{eq:formulaford(x)forgeneralfield}, with $\nabla h=(e_1,e_2)$, then $d\in C^1$ and  
the surface $\sigma_2$ parametrized by  
\[
f(x)=\left(\varphi(x),u(\varphi(x))\right)+d(x)\,m_1(x),\quad \text{where }m_1(x)=\dfrac{1}{\kappa_1}(e(x)-\lambda_1\,\nu_1)
\]
satisfies the equations \eqref{eq1} and \eqref{eq2}.\footnote{The curl condition is very natural because the directions of light rays $s$ (defined as the direction of the average Poynting vector) are parallel to the gradient $\nabla S$ where $S$ is the wave front, i.e.,
$S$ is solution to the eikonal equation and $\text{curl } \nabla S=0$, \cite[Section 3.1.2]{book:born-wolf}.
}
This implies that the vector $m_1(x)-\kappa_2w$ is parallel to the normal to $f(x)$, and therefore the surface described by $f(x)$ refracts the incident vector $m_1(x)$ into the direction $w$.

So far we obtained that $f$ is a $C^1$ parametrization of the surface $\sigma_2$. We claim that $\sigma_2$ is in fact twice differentiable. To prove this we will show that the unit normal vector $\nu_2$ to $\sigma_2$ is $C^1$. 
By \eqref{eq:refractionatsigma2} we have that $\nu_2=\dfrac{1}{\lambda_2} (m_1-\kappa_2 w)$, then 
from (C1) it follows that
$$0<\lambda_2=|\lambda_2|=|m_1-\kappa_2 w|=\sqrt{1-2\kappa_2m_1\cdot w+\kappa_2^2}.$$
It is then enough to show that $m_1$ is $C^1$. We have that 
$$\lambda_1=e(x) \cdot \nu_1(x) -\kappa_1\sqrt{1-\kappa_1^{-2}\left(1-(e(x)\cdot \nu_1(x)\right)^2)}.$$
Since $\kappa_1>1$, then $1-\kappa_1^{-2}(1-(e(x)\cdot \nu_1(x))^2)>0$. 
We have $e\in C^1$ and $\nu_1\in C^1$ (since $\sigma_1\in C^2$) then we get $\lambda_1(x)\in C^1$. 
Thus, $m_1 \in C^1$ from \eqref{eq:formula2form1}.

This completes the proof of the theorem.
\end{proof}

\begin{remark}\rm
In the 2d case, that is, when $\Omega$ is an interval, $\sigma_1$ and $\sigma_2$ are curves, and $e(x)=(e_1(x),e_2(x))$ is a two dimensional unit field, then Theorem \ref{thm:existenceofupperlenssurfaces} holds 
without any conditions on the derivative of $e_1$.
\end{remark}

\begin{remark}\rm
Combining lenses of the type described and using reversibility of optical paths, one can construct optical systems composed of two of these lenses that refract a given incoming field of directions $e(x)$ into another given field of directions $g(x)$.\end{remark}

\begin{remark}\label{rmk:self-interesection}\rm
Since the surface $\sigma_2$ in Theorem \ref{thm:existenceofupperlenssurfaces} is given parametrically, it may have self-intersections. This depends on the constant $C$ in \eqref{eq:formulaford(x)forgeneralfield}, the values of $\kappa_1,\kappa_2$, and the size of the Lipschitz constants for the functions $e,u,Du$ and $\varphi$, and the domains $\Omega$ and $\Omega'$. Clearly, if $\sigma_2$ has self-intersections, then it is not physically realizable. 
This is analyzed in detail in the forthcoming paper \cite{gutierrez-sabra:farfield-with-two-surfaces}. However, to illustrate this issue, for simplicity we analyze here the case when $e(x)=w=e_3$, that is, the incoming and outgoing rays are collimated; here we have $\varphi(x)=x$.
In this case, one can prove the following Lipschitz estimate for the refracted direction $m_1(x)$:
\begin{equation}\label{eq:Lipestimateformm_1}
|m_1(x)-m_1(y)|
\leq 
M(\kappa_1)\,|Du(x)-Du(y)|,
\end{equation}
for all $x,y\in \Omega$ and $M(\kappa_1)$ a constant depending only on $\kappa_1$. This estimate follows by calculation from the following estimate for the normal:
\[
|\nu_1(x)-\nu_1(y)|\leq \sqrt5\, |Du(x)-Du(y)|.
\]
This implies a Lipschitz estimate, linear in $|C|$, for the distance function $d(x,C,w)$.
In fact, we can write
$d(x,C,w)=\dfrac{C+v(x)}{g(x)}$, with
$v(x)=-(1-\kappa_1\kappa_2)\,u(x)$ and
$g(x)=\kappa_1-\kappa_1\kappa_2\,w\cdot m_1(x)$.
Since $\kappa_1(1-\kappa_2)\leq g(x)\leq \kappa_1(1+\kappa_2)$, we have
\begin{align*}
\left| d(x,C,w)-d(y,C,w)\right|
&=
\left| \dfrac{C\,(g(y)-g(x))+v(x)g(y)-v(y)g(x)}{g(x)g(y)}\right|\\
&\leq
\dfrac{1}{\kappa_1^2(1-\kappa_2)^2}\left(g(y)\,\left|v(x)-v(y) \right|+(|C|+|v(y)|)\left|g(y)-g(x) \right|\right).
\end{align*}
From \eqref{eq:Lipestimateformm_1}
\[
|g(x)-g(y)|\leq \kappa_1\kappa_2\,|m_1(x)-m_1(y)|
\leq M_1(\kappa_1,\kappa_2)\, |Du(x)-Du(y)|\leq M_1(\kappa_1,\kappa_2)\,L_{Du} |x-y|,
\]
with $L_{Du}$ the Lipschitz constant of $Du$ in $\Omega$.
Also
\[
|v(x)-v(y)|\leq (1+\kappa_1\kappa_2)\,|u(x)-u(y)|\leq (1+\kappa_1\kappa_2)\,L_u\,|x-y|,
\]
where $L_u$ is the Lipschitz constant of $u$ in $\Omega$; and $|v(y)|\leq (1+\kappa_1\kappa_2)\max_{\Omega}u$.
Therefore
\begin{equation}\label{eq:Lipschitzestimateofdx,C,w}
\left| d(x,C,w)-d(y,C,w)\right|
\leq
\left(A(\kappa_1,\kappa_2)\,L_u+B(\kappa_1,\kappa_2)\left[|C|+(1+\kappa_1\kappa_2)\max_{\Omega}u\right]\,L_{Du}\right)\,|x-y|.
\end{equation}
With this estimate we will show that if $C$ is appropriate, then the surface $\sigma_2$ given by $f(x)=(x,u(x))+d(x,C,w)\,m_1(x)$ cannot have self-intersections, i.e., $f$ is injective.
In fact, suppose that there are two points $x\neq y$ such that $f(x)=f(y)$. Then
\begin{align*}
|(x,u(x))-(y,u(y))|&\leq
d(y,C,w)\,|m_1(y)- m_1(x)|+|d(y,C,w)\,m_1(x)-d(x,C,w)\,m_1(x)|\\
&\leq
d(y,C,w)\,|m_1(y)- m_1(x)|+|d(y,C,w)-d(x,C,w)|.
\end{align*}
We have
\[
d(y,C,w)\leq \dfrac{|C|+(1+\kappa_1\kappa_2)\max_{\Omega}u}{\kappa_1(1-\kappa_2)}.
\] 
We then obtain from \eqref{eq:Lipestimateformm_1} and \eqref{eq:Lipschitzestimateofdx,C,w} that
\begin{align*}
1&\leq \dfrac{|C|+(1+\kappa_1\kappa_2)\max_{\Omega}u}{\kappa_1(1-\kappa_2)}\,M(\kappa_1)\,L_{Du}
+
A(\kappa_1,\kappa_2)\,L_u+B(\kappa_1,\kappa_2)\left[|C|+(1+\kappa_1\kappa_2)\max_{\Omega}u\right]\,L_{Du}\\
&=
\left(\dfrac{M(\kappa_1)}{\kappa_1(1-\kappa_2)}+B(\kappa_1,\kappa_2) \right)\,L_{Du}\,|C|
+
(1+\kappa_1\kappa_2)\max_{\Omega}u\left( \dfrac{M(\kappa_1)}{\kappa_1(1-\kappa_2)}\,+B(\kappa_1,\kappa_2)\right)\,L_{Du}
+A(\kappa_1,\kappa_2)\,L_u\\
&:=\alpha\,|C|+\beta.
\end{align*}
If we choose $L_u$ and $L_{Du}$ sufficiently small, then $\beta<1/2$. This implies that $|C|\geq \dfrac{1}{2\,\alpha}$.
Therefore if $|C|<1/2\alpha$, the surface $\sigma_2$ cannot have self intersections.
Let $\bar C=\max_\Omega -v(x)$. If we choose $L_{Du}$ small enough such that $\bar C<\frac{1}{2\alpha}$ and pick $C>\bar C$ with $|C|<\frac{1}{2\alpha}$, then $d(x,C,w)>0$ and the surface $\sigma_2$ is physically realizable.
\end{remark}

\subsection{Example 1.}\label{sec:Pointsource} 
%
%
%
%
%
%

%
%
%
%
The case with one point source considered in \cite[Section 3]{gutierrez:asphericallensdesign} is a special case of the problem considered above.
In fact, suppose that the rays with direction $e(x)$ for $x\in \Omega$ all intersect at a virtual point $V=(a,b,c)$ below the plane containing $\Omega$. Then $e(x)=\dfrac{(x,0)-V}{|(x,0)-V|}$, and so $e'(x)=\dfrac{x-(a,b)}{|(x,0)-V|}$ which is equal to $\nabla_x|(x,0)-V|$ and therefore $\text{curl } e'(x)=0$ in $\Omega$.

\subsection{Example 2.}\label{sec:avoidreflection}

We are given the incident unit field $e(x)=(e_1(x),e_2(x),e_3(x))=(e'(x),e_3(x))$ with $x\in \Omega,$ where $e'(x)=\nabla h(x)$, and $e_3(x)>0$. First, we are going to construct a surface $\sigma_1$ that is orthogonal to the incident rays, and consequently minimizes internal reflection. Then using Theorem \ref{thm:existenceofupperlenssurfaces}, we find the surface $\sigma_2$ such that the lens enclosed by $\sigma_1$ and $\sigma_2$ refracts all the field $e(x)$ into a given unit direction $w$.
The surface $\sigma_1$ is parametrized by the vector 
$$
g(x)=(x,0)+\lambda(x)\, e(x)=(x+\lambda(x)\,e'(x), \lambda(x)\,e_3(x)),$$
with $\lambda(x)$ a positive function to be determined so that $e(x)$ is normal to $g(x)$ for each $x\in \Omega$. This is equivalent to find $\lambda(x)$ so that $e(x)$ is perpendicular to the tangent plane at $g(x)$, that is,
$$
\left((x,0)+\lambda(x)\, e(x)\right)_{x_i}\cdot e(x)=0,
$$
for $i=1,2$.
Which is in turn equivalent to
$e_{i}(x)+\lambda_{x_i}(x)=0$, for $i=1,2$, since $|e(x)|=1$. Therefore, if we choose $\lambda(x)=-h(x)+\tilde C$, with $\tilde C$ an appropriate constant so that $\lambda(x)>0$ for all $x\in \Omega$, we then obtain $\sigma_1$ orthogonal to the field $e(x)$.

To apply Theorem \ref{thm:existenceofupperlenssurfaces}, we need to write 
$g(x)=(\varphi(x),u(\varphi(x)))$ with $\varphi(x)=x+\lambda(x)\,e'(x)$, and $u$ such that $u(\varphi(x))=\lambda(x)\,e_3(x)$. By the inverse theorem, this is possible if the Jacobian of the map $\varphi$ is not zero. In such case, we let $u(z)=\lambda\left(\varphi^{-1}(z)\right)e_3\left(\varphi^{-1}(z)\right).$ Since $\nu_1(x)=e(x)$, then from \eqref{eq:formulaforlambda}, $\lambda_1(x)=1-\kappa_1$, from \eqref{eq:formula2form1} $m_1(x)=e(x)$, and therefore from \eqref{eq:formulaford(x)forgeneralfield} we obtain
$$
d(x)=\dfrac{C-\tilde C+\kappa_1\kappa_2\,w\cdot \left((x,0)+\lambda(x)\,e(x) \right)}{\kappa_1-\kappa_1\kappa_2w\cdot e(x)}.
$$
Then \eqref{eq:formulafortherightcurveefield} yields the following simplified 
parametrization for $\sigma_2$
\[
f(x)=(x,0)+\left(\lambda(x)+d(x) \right)\,e(x)
=(x,0)+\dfrac{C-\tilde C +\kappa_1\lambda(x)+\kappa_1\kappa_2w\cdot (x,0)}{\kappa_1-\kappa_1\kappa_2w\cdot e(x)}e(x).\]

\subsection{Example 3.}\label{sec:verticalcase} Consider the special case when $e(x)=w=(0,0,1)$, i.e., the rays entering and leaving the lens have unit vertical direction $e_3$.  Clearly, in this case the curl condition \eqref{eq:curlzerocondition} is satisfied, and $\varphi(x)=x$.
So the ray from $(x,0)$ strikes the surface $\sigma_1$ at the point $P(x)=(x,u(x))$.
The normal at $P(x)$ is given by
$$\nu_1(x)=\dfrac{(-Du(x),1)}{\sqrt{1+|Du(x)|^2}},$$ and so the condition $\nu_1(x)\cdot e_3\geq 0$ is satisfied. 

%

For the application that will be described later on, we now rewrite $d(x)$ and $f(x)$ in terms of $u$ and its gradient. 
We have that
\begin{align*}
\lambda_1&= e_3\cdot \nu_1 -\kappa_1 \,\sqrt{1-\dfrac{1}{\kappa_1^2}\left(1-(e_3\cdot \nu_1)^{2}\right)}\\
&=\dfrac{1}{\sqrt{1+|Du(x)|^2}}\left(1-\kappa_1\sqrt{1+\left(1-\dfrac{1}{\kappa_1^2}\right)|Du(x)|^2}\right),
\end{align*}
and then
$$\lambda_1\nu_1(x)=\Delta(x)\left(-Du(x),1\right),$$ with

\begin{equation}\label{eq:definitionofDelta(x)}
\Delta(x):=\dfrac{1}{1+|Du(x)|^2}
\left(1-\kappa_1\sqrt{1+\left( 1-\dfrac{1}{\kappa_1^2}\right)|Du(x)|^2} \right).
\end{equation}

Condition \eqref{eq:conditionforrefraction} becomes
\begin{equation}\label{eq:conditionvertical}
\Delta(x)\leq 1-\kappa_1\kappa_2.
\end{equation}

We conclude that if $u$ satisfies \eqref{eq:conditionvertical}, then by \eqref{eq:formulafortherightcurveefield}, \eqref{eq:formula2form1}, and \eqref{eq:formulaford(x)forgeneralfield} 
\begin{align}
f(x)&=(x,u(x))+d(x)m_1(x), \label{eq:generalformulaforf}\\
d(x)&= -\dfrac{(1-\kappa_1\kappa_2)u(x)+C}{\kappa_1-\kappa_2(1-\Delta(x))},\label{eq:generalformulaford}\\
m_1(x)&=\dfrac{1}{\kappa_1}\left( \Delta(x)\,Du(x), 1-\Delta(x) \right). \label{eq:formula3form1}
\end{align}

%

\setcounter{equation}{0}
\section{Imaging Problem}\label{sec:nearfield}
We are given a bijective transformation $T$ between the planar source $\Omega$ and a plane domain $\Omega^*$, and $a>0$. 
Our goal now is to find two surfaces $\sigma_1$ and $\sigma_2$ such that the lens sandwiched by them refracts every ray emitted from $(x,0)$ with vertical direction $e_3$ into the point $(Tx,a)$, and such that rays leaving $\sigma_2$ have direction $e_3$. 
Using the construction from the previous section, $\sigma_2$ is determined by $\sigma_1$ which has the form $(x,u(x))$ with $u\in C^2$ to be calculated.
\begin{figure}
\includegraphics[width=4in]{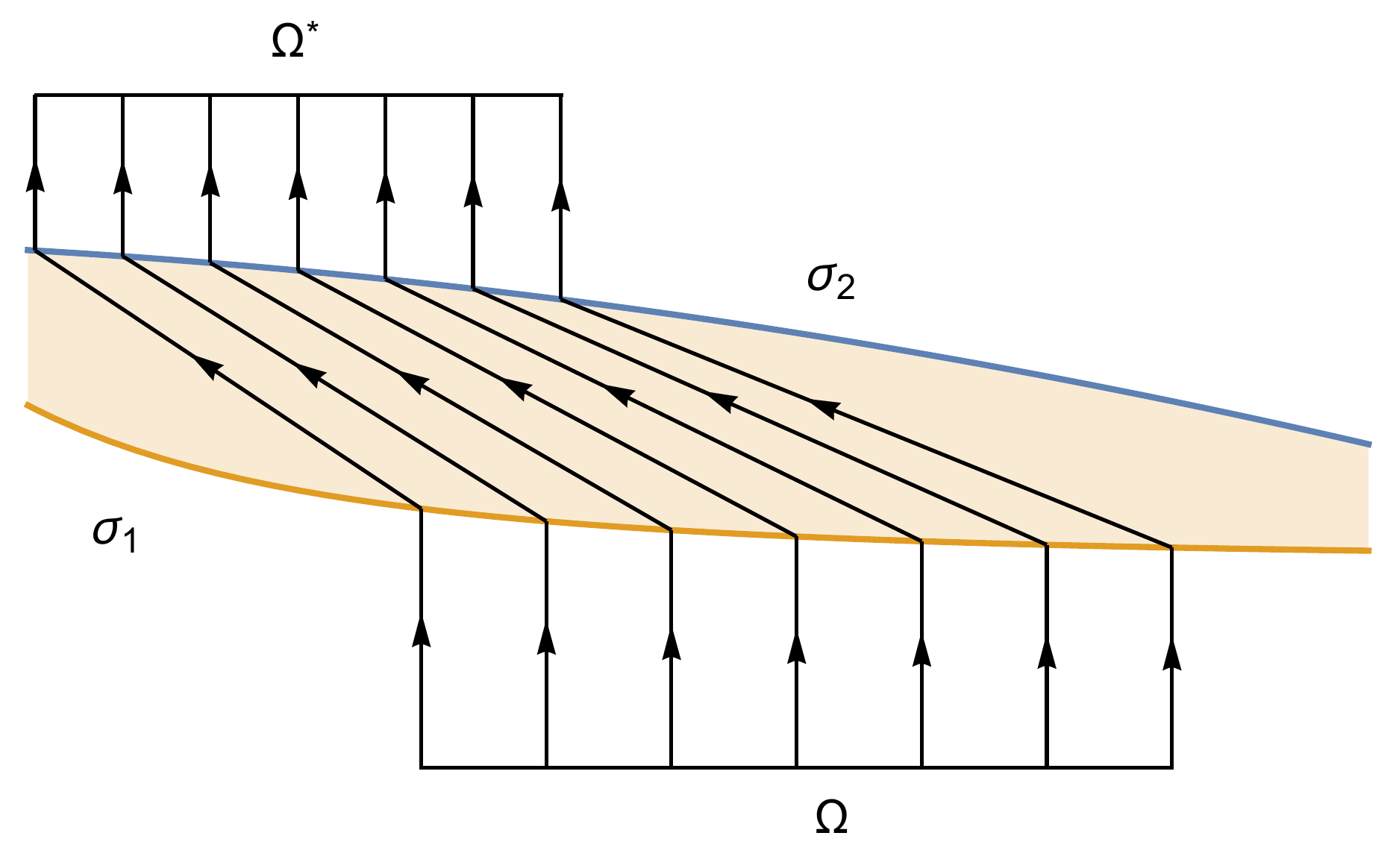}
\caption{Imaging problem}
\label{fig:imagingproblem}
\end{figure}

More precisely, the ray with direction $e_3$ strikes $\sigma_1$ at $P(x)=(x,u(x))$ and is refracted into the unit direction $m_1$ given by \eqref{eq:formula3form1}. The ray then hits $\sigma_2$ at the point $f(x)=(f_1(x),f_2(x),f_3(x))$, given in terms of $u(x)$ by \eqref{eq:generalformulaforf}. It is then finally refracted into the medium $n_3$ with direction $e_3$ into the point $(Tx,a)$, see Figure \ref{fig:imagingproblem}. 
As before, we assume that the material enclosed within the lens has refractive index $n_2$, such that $n_2>n_1, n_3$, but $n_1$ and $n_3$ are unrelated; and we use the notation $\kappa_1=n_2/n_1, \kappa_2=n_3/n_2$.

Since rays leave $\sigma_2$ with direction $e_3$ into the point $(Tx,a)$, then $Tx=(f_1(x),f_2(x))$. Therefore from \eqref{eq:generalformulaforf}, \eqref{eq:generalformulaford}, \eqref{eq:formula3form1}, we get that  $u$ must satisfy the following  system of partial differential equations
\begin{equation}\label{eq:firstformpde}
Tx-x=-\dfrac{(1-\kappa_1\kappa_2)u(x)+C}{\kappa_1^2-\kappa_1\kappa_2(1-\Delta(x))}\Delta(x)Du(x),
\end{equation}
where $\Delta(x)$ is given by \eqref{eq:definitionofDelta(x)}.
To write \eqref{eq:firstformpde} more explicitly, notice that
\begin{equation}\label{eq:specialformula}
\Delta(x)\left(1+\sqrt{\kappa_1^2+(\kappa_1^2-1)|Du(x)|^2}\right)=1-\kappa_1^2.
\end{equation}
Multiplying and dividing the right hand side of \eqref{eq:firstformpde} by $\left(1+\sqrt{\kappa_1^2+(\kappa_1^2-1)|Du(x)|^2}\right)$, and using \eqref{eq:specialformula}, we obtain that
the pde \eqref{eq:firstformpde} can be re written as
\begin{equation}\label{eq:secondformpde}
\dfrac{(1-\kappa_1\kappa_2)u(x)+C}{(\kappa_1^2-\kappa_1\kappa_2)\sqrt{\kappa_1^2+(\kappa_1^2-1)|Du(x)|^2}+\kappa_1^2(1-\kappa_1\kappa_2)}Du(x)=\dfrac{Sx}{\kappa_1^2-1},
\end{equation}
where $Sx=Tx-x$.

Since $\kappa_1>1$, then from \eqref{eq:specialformula} $\Delta(x)<0$. 
Hence the refraction condition \eqref{eq:conditionvertical} follows immediately if $\kappa_1\kappa_2\leq 1$, that is, when $n_1\geq n_3$.

\setcounter{equation}{0}
\section{Solution of the imaging problem when $n_1=n_3$}\label{sec:sameindex}

In this section we prove the following theorem.
\begin{theorem}\label{thm:resultwhensameindex}
Assume $n_1=n_3$. Let $T$ be a $C^1$ bijective map between the set $\Omega$ and a plane domain $\Omega^*$, and $a>0$. Then there exists a lens refracting all rays emitted from $(x,0), \, x\in \Omega$, with direction $e_3$ into the point $(Tx,a)$ such that rays leave the lens with direction $e_3$ if and only if
$$\left(C^2-(\kappa_1^2-1)\,|S|^2\right)\,\text{\rm curl } S=\dfrac{\kappa_1^2-1}{2}\,S\times D|S|^2,$$
where $Sx=Tx-x$, and $C$ is a negative constant with $|C|>|Sx|\sqrt{\kappa_1^2-1}$.
In this case, the lower face of the lens $\sigma_1$ is parametrized by $(x,u(x))$ where $u$ is given by \eqref{eq:solutionn1equaln3}, and the top face $\sigma_2$ of the lens is parametrized by the vector $f(x)=(x,u(x))+d(x)m_1(x)$, with $d$ and $m_1$ given by \eqref{eq:formulawhenn3equal} and \eqref{eq:formula3form1} respectively. Both surfaces $\sigma_1$ and $\sigma_2$ are $C^2$.
\end{theorem}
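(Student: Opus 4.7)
The key observation is that $n_1=n_3$ forces $\kappa_1\kappa_2 = (n_2/n_1)(n_3/n_2) = 1$, so the term $(1-\kappa_1\kappa_2)u(x)$ in \eqref{eq:secondformpde} vanishes and the PDE collapses to the purely algebraic relation
\[
\dfrac{C\, Du(x)}{\sqrt{\kappa_1^2+(\kappa_1^2-1)|Du(x)|^2}}=Sx.
\]
My plan is to invert this algebraically: squaring yields $|Du(x)|^2=\kappa_1^2|Sx|^2/\bigl(C^2-(\kappa_1^2-1)|Sx|^2\bigr)$, which is well defined precisely under the hypothesis $|C|>|Sx|\sqrt{\kappa_1^2-1}$. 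Substituting back into the original identity and picking the sign forced by the positivity of the thickness in \eqref{eq:generalformulaford} (which requires $C<0$) produces the explicit expression
\[
Du(x)=-\dfrac{\kappa_1\, Sx}{\sqrt{C^2-(\kappa_1^2-1)|Sx|^2}}.
\]
Since this algebraic inversion is a genuine equivalence, the problem reduces to asking when the right hand side is a gradient.

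Next, I would compute the 2D curl of that vector field by the product rule. With $\phi(x)=\sqrt{C^2-(\kappa_1^2-1)|Sx|^2}$ and $\nabla\phi = -\tfrac{\kappa_1^2-1}{2\phi}D|S|^2$, a short calculation yields
\[
\operatorname{curl}\!\left(\dfrac{S}{\phi}\right)=\dfrac{1}{\phi}\operatorname{curl} S-\dfrac{\kappa_1^2-1}{2\phi^3}\bigl(S\times D|S|^2\bigr).
\]
Multiplying through by $\phi^3$ shows that this curl vanishes if and only if precisely the identity stated in the theorem holds. Because $\Omega$ is simply connected, vanishing of the curl is in turn equivalent to the existence of a scalar $u$ realising the prescribed gradient, so both directions of the ``if and only if'' drop out together.

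Finally, assuming the curl identity holds, I would obtain $u$ by line integration along any path (producing formula \eqref{eq:solutionn1equaln3}, with the additive constant irrelevant since $u$ enters only through its gradient) and then construct the top face $\sigma_2$ by substituting $u$ into \eqref{eq:generalformulaforf}--\eqref{eq:formula3form1} of Section \ref{sec:verticalcase} to obtain the formula \eqref{eq:formulawhenn3equal} for $d$; the compatibility condition \eqref{eq:conditionvertical} is automatic because $\kappa_1\kappa_2\leq 1$, as noted right after \eqref{eq:secondformpde}. Regularity is then inherited: $T\in C^1$ gives $S\in C^1$, and since $\phi$ is a smooth strictly positive function of $S$, one has $Du\in C^1$ and hence $u\in C^2$; the same normal-vector argument used at the end of the proof of Theorem \ref{thm:existenceofupperlenssurfaces} upgrades $\sigma_2$ to $C^2$. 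The main obstacle is bookkeeping rather than analysis: tracking signs through the square-root inversion, committing to a single sign convention for the 2D cross product $S\times D|S|^2$, and verifying that the algebraic manipulation delivers the symmetric form in the statement rather than a superficially different equivalent.
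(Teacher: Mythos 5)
Your proposal is correct and follows essentially the same route as the paper: reduce \eqref{eq:secondformpde} via $\kappa_1\kappa_2=1$ to \eqref{eq:pdewhenn1equaln3}, invert algebraically (using $C<0$ from positivity of $d$) to the linear system $Du=-\kappa_1 S/\sqrt{C^2-(\kappa_1^2-1)|S|^2}$, and characterize solvability by the vanishing of the curl of the right-hand side on the simply connected $\Omega$. The only cosmetic difference is that you compute that curl by the product rule for $\operatorname{curl}(S/\phi)$, whereas the paper expands $\partial_{x_2}F_1-\partial_{x_1}F_2$ componentwise; both land on the same condition \eqref{eq:conditionsamemedium}.
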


\begin{proof}
In this case $\kappa_1\kappa_2=1$ and then by \eqref{eq:generalformulaford}, \eqref{eq:formula3form1} we have that
\begin{equation}\label{eq:formulawhenn3equal}
d(x)=-\dfrac{C}{\kappa_1-e_3\cdot m_1}.
\end{equation}
Since $d(x)>0$ and $e_3\cdot m_1 \leq 1<\kappa_1$, then $C<0$. 
The pde \eqref{eq:secondformpde} becomes
\begin{equation}\label{eq:pdewhenn1equaln3}
\dfrac{Du(x)}{\sqrt{\kappa_1^2+(\kappa_1^2-1)|Du(x)|^2}}=\dfrac{Sx}{C}.
\end{equation}
Taking absolute values it follows that 
\begin{equation}\label{eq:boundforTminusI}
|Sx|<\dfrac{|C|}{\sqrt{\kappa_1^2-1}}.
\end{equation}
Therefore to find $u$ solving \eqref{eq:pdewhenn1equaln3}, the constant $C$, and the map $T$ must be chosen so that condition \eqref{eq:boundforTminusI} is satisfied for all $x\in \Omega$. 
When $T$ is a magnification, condition \eqref{eq:boundforTminusI} is related to the thickness of the lens which will be analyzed in Example \ref{sec:Magnification}.

To solve the pde \eqref{eq:pdewhenn1equaln3}, an extra condition on $S$ is required, condition \eqref{eq:conditionsamemedium}.
In fact, 
taking absolute values and squaring both sides of \eqref{eq:pdewhenn1equaln3} we obtain that
\[
|Du(x)|^2\left(C^2-\left(\kappa_1^2-1\right)|Sx|^2\right)=\kappa_1^2|Sx|^2.
\]
From \eqref{eq:boundforTminusI}, $C^2-\left(\kappa_1^2-1\right)|Sx|^2>0$, and so
\begin{equation*}
|Du(x)|= \dfrac{\kappa_1|Sx|}{\sqrt{C^2-\left(\kappa_1^2-1\right)|Sx|^2}},
\end{equation*}
which yields
\[
\kappa_1^2+\left(\kappa_1^2-1\right)|Du(x)|^2
=\dfrac{\kappa_1^2\,C^2}{C^2-\left(\kappa_1^2-1\right)|Sx|^2}.
\]
Replacing this in \eqref{eq:pdewhenn1equaln3} and using that $C<0$, we obtain the equivalent linear system
\begin{equation}\label{eq:pden1equaln3simplified}
Du(x)=\dfrac{-\kappa_1\,Sx}{\sqrt{C^2-\left(\kappa_1^2-1\right)|Sx|^2}}
:=
F(x)=(F_1(x),F_2(x)).
\end{equation}
If $u$ is a $C^2$ solution to \eqref{eq:pden1equaln3simplified}, then the field $F$ is conservative. Conversely if  $\partial_{x_2}F_1= \partial_{x_1}F_2$ in $\Omega$ with $\Omega$ simply connected, then 
\begin{equation}\label{eq:solutionn1equaln3}
u(x)=u(x_0)+\int_\gamma F(x)\cdot dr,
\end{equation}
is a solution to \eqref{eq:pdewhenn1equaln3}, where the integral is a line integral along an arbitrary path $\gamma$ from $x_0$ to $x$ in $\Omega$.

The equality $\partial_{x_2}F_1= \partial_{x_1}F_2$ yields a condition on $S=(S_1,S_2)$. In fact,
{\small
\begin{align*}
\partial_{x_2}F_1
&=-\kappa_1 \dfrac{\partial_{x_2}S_1 \,\sqrt{C^2-\left(\kappa_1^2-1\right)|S|^2}+\left(\kappa_1^2-1\right)\dfrac{S_1\,\partial_{x_2}S_1+S_2\,\partial_{x_2}S_2}{\sqrt{C^2-\left(\kappa_1^2-1\right)|Sx|^2}}S_1}{\left(\sqrt{C^2-\left(\kappa_1^2-1\right)|S|^2}\right)^2}\\
&= \dfrac{-\kappa_1}{\left(C^2-\left(\kappa_1^2-1\right)|S|^2\right)^{3/2}}\left[\left(C^2-\left(\kappa_1^2-1\right)S_2^2\right)\partial_{x_2}S_1+\left(\kappa_1^2-1\right)\,S_1S_2\,\partial_{x_2}S_2\right].
\end{align*}
}
Similarly
{\small
\[
\partial_{x_1}F_2=\dfrac{-\kappa_1}{\left(C^2-\left(\kappa_1^2-1\right)|S|^2\right)^{3/2}}\left[\left(C^2-\left(\kappa_1^2-1\right)S_1^2\right)\partial_{x_1}S_2+\left(\kappa_1^2-1\right)\,S_1S_2\,\partial_{x_1}S_1\right].
\]
}
Therefore $S$ must satisfy the following condition
{\small
\[
\left(C^2-\left(\kappa_1^2-1\right)S_2^2\right)\partial_{x_2}S_1+\left(\kappa_1^2-1\right)\,S_1S_2\,\partial_{x_2}S_2=
\left(C^2-\left(\kappa_1^2-1\right)S_1^2\right)\partial_{x_1}S_2+\left(\kappa_1^2-1\right)\,S_1S_2\,\partial_{x_1}S_1.
\]
}
Simplifying this expression yields
%
$$
C^2 \,\left(\dfrac{\partial S_2}{\partial x_1}- \dfrac{\partial S_1}{\partial x_2}\right)
+ (\kappa_1^2-1)
\left(S_1 S_2 \left( \dfrac{\partial S_1}{\partial x_1}-\dfrac{\partial S_2}{\partial x_2}\right)
+
S_2^2 \dfrac{\partial S_1}{\partial x_2}
-S_1^2 \dfrac{\partial S_2}{\partial x_1}\right)=0,
$$
and if we set $|S|=\sqrt{(S_1x)^2 +(S_2x)^2}$, then the last condition is equivalent to
\begin{equation}\label{eq:conditionsamemedium}
\left(C^2-(\kappa_1^2-1)\,|S|^2\right)\,\text{curl}\,S=\dfrac{\kappa_1^2-1}{2}\,S\times D|S|^2,
\end{equation}
where $\times$ denotes the cross product in two dimensions. 
This completes the proof of the theorem.
\end{proof}

Notice that if $Tx=((\alpha_1+1)x_1,(\alpha_2+1)x_2)$, then $Sx= (\alpha_1\,x_1 ,\alpha_2\, x_2)$, $\text{curl } S=0$ and
\begin{align*}
S\times D|S|^2&= (\alpha_1\, x_1,\alpha_2\, x_2)\times (2\alpha_1x_1,2\alpha_2 x_2)\\
&=2\alpha_1\alpha_2x_1x_2(\alpha_1-\alpha_2).
\end{align*}
So $S$ satisfies condition \eqref{eq:conditionsamemedium} if and only if $\alpha_1=\alpha_2$ or $\alpha_1=0$ or 
$\alpha_2=0$. In the following examples we analyze these cases.

\subsection{Example 1.}\label{sec:Magnification} $Tx=(1+\alpha)x$, $\alpha> 0$ i.e.
 $T$ is a magnification with factor $1+\alpha$. In this case, we have that $F$ in \eqref{eq:pden1equaln3simplified} is
\[
F(x)=\dfrac{-\kappa_1\,\alpha \, x}{\sqrt{C^2-\left(\kappa_1^2-1\right)\alpha^2|x|^2}},
\]
and \eqref{eq:pden1equaln3simplified} then reads
\begin{align}
u_{x_1}(x)&= \dfrac{-\kappa_1\,\alpha\,x_1}{\sqrt{C^2-\left(\kappa_1^2-1\right)\alpha^2|x|^2}} \label{eq:firstpartialdilation}\\
u_{x_2}(x)&=\dfrac{-\kappa_1\,\alpha\,x_1}{\sqrt{C^2-\left(\kappa_1^2-1\right)\alpha^2|x|^2}}. \label{eq:secondpartialdilation}
\end{align}
Integrating \eqref{eq:firstpartialdilation} with respect to $x_1$ yields
\[
u(x)= \dfrac{\kappa_1}{\alpha\left(\kappa_1^2-1\right)}\sqrt{C^2-\left(\kappa_1^2-1\right)\alpha^2|x|^2}+\phi(x_2).
\]
Differentiating this with respect to $x_2$ and using \eqref{eq:secondpartialdilation} we obtain that $\phi'(x_2)=0$ and hence
\begin{equation}\label{eq:solutiondilation}
u(x)=\dfrac{\kappa_1 }{\alpha(\kappa_1^2-1)}\sqrt{C^2-\left(\kappa_1^2-1\right)\alpha^2|x|^2}+A.
\end{equation}
Notice that \eqref{eq:solutiondilation} implies that the graph of $u$ is contained in the ellipsoid with equation $(z-A)^2+\kappa_1^2\,|x|^2=\left(\dfrac{C\,\kappa_1}{\alpha\,(\kappa_1^2-1)} \right)^2$.
Figure \ref{fig:picnearfield} shows the lens obtained when $Tx=2x, a=6, n_1=n_3=1$, and $n_2=1.52$.

\begin{figure}
\includegraphics[width=3in, height =3in]{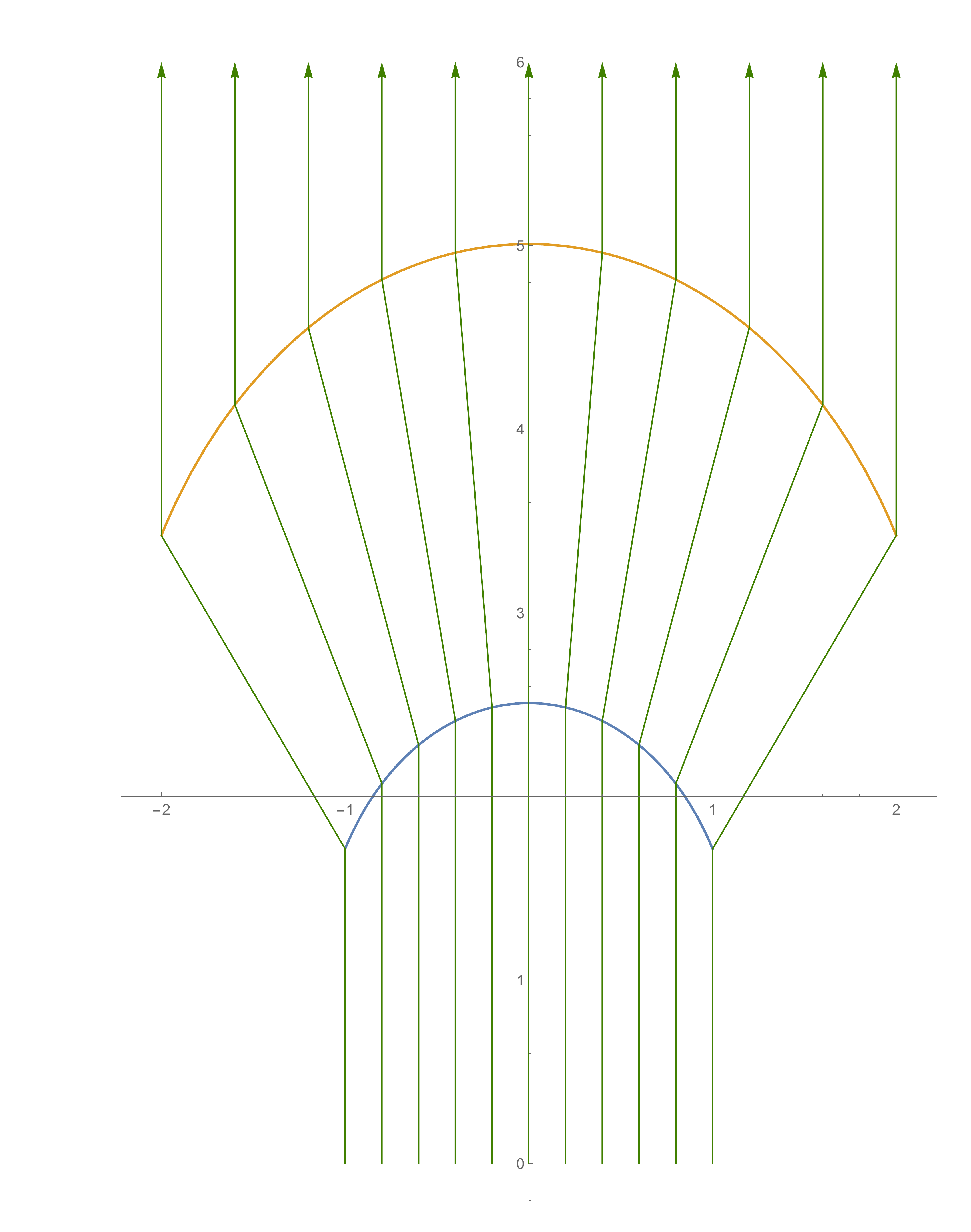}
\caption{Near Field magnification}
\label{fig:picnearfield}
\end{figure}

We analyze now the thickness of the lens constructed.

If $\Omega$ is the closed ball  $B_\gamma(0)$, then condition \eqref{eq:boundforTminusI} is satisfied for $x\in B_\gamma(0)$ when
\begin{equation}\label{eq:boundforTminusImagnification}
\alpha \gamma< \dfrac{|C|}{\sqrt{\kappa_1^2-1}}.
\end{equation} 
To analyze the thickness, 
we fix the value of $u$ at a point, say at $x=0$, and its normal vector $\nu_0$ at that point. The refracted vector $m_1(0)$ at $(0,u(0))$ is determined by \eqref{eq:formula3form1}.
From \eqref{eq:formulawhenn3equal} 
\begin{equation}\label{eq:formulaforconstantC}
C=-d(0)\,\left( \kappa_1-e_3\cdot m_1(0)\right).
\end{equation}
$d(0)$ represents the distance along the ray with direction $m_1(0)$ from the point $(0,u(0))$ to $f(0)$ on the second surface.
This means that once we prescribe the value of $d(0)$, that is, the thickness of the lens on the refracted ray with direction $m_1(0)$, the value of $C$ is given by \eqref{eq:formulaforconstantC}. Notice that the value of the constant $C$ depends also on the value of the normal $\nu_0$.

Since $n_1<n_2$, by (C2) applied to $\sigma_1$, we have $e_3\cdot m_1\geq \dfrac{1}{\kappa_1}$. Therefore, 
$$\kappa_1-1\leq \kappa_1-e_3\cdot m_1(x)\leq \kappa_1-\dfrac{1}{\kappa_1}.$$ 
For $x=0$, since $d(0)>0$, from \eqref{eq:formulaforconstantC} we then obtain the following bounds for the constant $C$:
\begin{equation}\label{eq:boundsfor|C|}
d(0)\left( \kappa_1-1\right)\leq |C|\leq d(0)\left( \kappa_1-\dfrac{1}{\kappa_1}\right).
\end{equation}
Notice that from \eqref{eq:formulawhenn3equal}, $C=-d(x)\,(\kappa_1-e_3\cdot m_1(x))$ for all $x\in \Omega$. Then from \eqref{eq:boundsfor|C|}, we obtain the following bound for the thickness $d(x)$ for each $x\in B_\gamma(0)$
\[
d(0)\,\dfrac{\kappa_1}{\kappa_1+1}\leq d(x)\leq d(0)\,\dfrac{\kappa_1+1}{\kappa_1}.
\]

Now if we assume \eqref{eq:boundforTminusImagnification}, we then obtain from \eqref{eq:boundsfor|C|} that
\[
\alpha\,\gamma\leq d(0)\,\dfrac{\sqrt{\kappa_1^2-1}}{\kappa_1}.
\]
This means that if we choose $\alpha$ and $\gamma$ arbitrarily, to have the desired refraction job we need to take the thickness $d(0)$ sufficiently large and $C$ satisfying \eqref{eq:formulaforconstantC}.
On the other hand, if we choose in advance the thickness $d(0)$, and then pick $\alpha,\gamma$ satisfying 
\[
\alpha \,\gamma<d(0)\,\dfrac{\kappa_1-1}{\sqrt{\kappa_1^2-1}},
\]
then choosing $C$ in accordance with \eqref{eq:formulaforconstantC} we obtain, in view of \eqref{eq:boundsfor|C|}, that 
\eqref{eq:boundforTminusImagnification} holds and therefore the solution $u$ is given by \eqref{eq:solutiondilation} with the constant $A$ determined by the value of $u(0)$.

\subsection{Example 2}
If $Tx=((1+\alpha_1)x_1,x_2)$, then $F$ in \eqref{eq:pden1equaln3simplified} is
  \[
F(x)=\dfrac{-\kappa_1 (\alpha x_1,0)}{\sqrt{C^2-\left(\kappa_1^2-1\right)\alpha^2 x_1^2}},
\]
and $T$ satisfies \eqref{eq:boundforTminusI} when $|\alpha_1 \,x_1|\leq |C|/\sqrt{\kappa_1^2-1}$.
Then solving  \eqref{eq:pden1equaln3simplified} yields 
\begin{equation}\label{eq:onevariablesolutiondilation}
u(x_1,x_2)= \dfrac{\kappa_1}{\alpha \left(\kappa_1^2-1\right)}\sqrt{C^2-\left(\kappa_1^2-1\right)\alpha^2 x_1^2}+A.
\end{equation}

More generally, if $Tx=(h(x_1),x_2)$, then $Sx=(h(x_1)-x_1,0)$ satisfies \eqref{eq:conditionsamemedium}. If $|h(x_1)-x_1|\leq |C|/\sqrt{\kappa_1^2-1}$, then solving \eqref{eq:pden1equaln3simplified} we obtain for $x_0,x\in \Omega$ that
\begin{equation}\label{eq:onevariablesolutiontransformation}
u(x_1,x_2)= u(x_0^1,x_0^2)+ \int_{x_0^1}^{x_1} \dfrac{-\kappa_1(h(t)-t)}{\sqrt{C^2-(\kappa_1^2-1)(h(t)-t)^2}}\, dt.
\end{equation}

\setcounter{equation}{0}
\section{Solution of the imaging problem when $n_1>n_3$ }\label{sec:n3smaller}

In this case $\kappa_1\kappa_2<1$. We are seeking for a solution $u$ to \eqref{eq:secondformpde} so that the lens $(\sigma_1,\sigma_2)$ solves the problem described in Section \ref{sec:nearfield}, where $\sigma_1$ is given by the graph of $u$ and $\sigma_2$ is parametrized by the vector $f(x)=(x,u(x))+d(x)m_1$ where $d$ and $m_1$ are given in \eqref{eq:generalformulaford}, and \eqref{eq:formula3form1} respectively. This case have potential applications to design underwater vision devices
\cite{patent-air-water-camera}, and in immersion microscopy \cite{glycerol-lens-leica} and \cite{water-inmmersion-objective}.

 We remark the following. From \eqref{eq:specialformula} and since $\kappa_1\kappa_2<1$, we deduce that the denominator in formula \eqref{eq:generalformulaford} is positive. Since $d(x)$ must be positive, it follows that
the numerator $(1-\kappa_1\kappa_2)u(x)+C$ must be negative. Moreover, since the lens we want to construct is above the source, then $\sigma_1$ is above the plane containing $\Omega$, i.e., $u$ must be positive.

We will prove in this section the following theorem.

\begin{theorem}\label{eq:existensolutionpde}
Let $C$ be a negative constant, $x_0\in \Omega$, with $\Omega,\Omega^*$ plane domains with $\Omega$ simply connected, $T:\Omega\to \Omega^*$ bijective and $C^1$, and $Sx=Tx-x$. 
Suppose that
\begin{equation}\label{eq:conditionatx_0}
|Sx_0|<\dfrac{-C\,\sqrt{\kappa_1^2-1}}{\kappa_1\,(\kappa_1-\kappa_2)},
\end{equation}
and 
\begin{align}
\text{\rm curl } Sx &=0\label{firstcurlconditionforsolution}\\
S\times D(|Sx|^2)&=0\label{eq:secondcurlconditionforsolution},
\end{align}
for all $x$ in a neighborhood of $x_0$; see Remark \ref{rmk:condtionsonS}.

Then given 
\begin{equation}\label{eq:initialconditiondelta0}
0<\delta_0<\dfrac{1}{1-\kappa_1\kappa_2}\left(-C-\dfrac{\kappa_1(\kappa_1-\kappa_2)}{\sqrt{\kappa_1^2-1}} |Sx_0|\right)
\end{equation} 
there exists a neighborhood $U$ of $x_0$, and a unique $u\in C^2(U)$ solving the PDE \eqref{eq:secondformpde} in $U$ and  satisfying $u(x_0)=\delta_0$ and 
\begin{equation}\label{eq:inequalityinu}
0<u(x)<\dfrac{-C}{1-\kappa_1\kappa_2},
\end{equation}
for every $x\in U$.
\end{theorem}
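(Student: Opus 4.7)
The plan is to rewrite the PDE \eqref{eq:secondformpde} in the explicit form $Du=F(x,u)$ for a $C^1$ vector field $F$, show that conditions \eqref{firstcurlconditionforsolution}--\eqref{eq:secondcurlconditionforsolution} are exactly the Frobenius compatibility conditions for this overdetermined system, and then construct $u$ locally by successive ODE integrations.

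For the first step, write $v=(1-\kappa_1\kappa_2)u+C$, $A=\kappa_1^2-\kappa_1\kappa_2$, $B=\kappa_1^2(1-\kappa_1\kappa_2)$, and $\Psi(t)=A\sqrt{\kappa_1^2+(\kappa_1^2-1)t}+B$. In the range \eqref{eq:inequalityinu} one has $v<0$, and taking moduli in \eqref{eq:secondformpde} yields
$$\phi(|Du|^2)=\frac{|Sx|}{|v|(\kappa_1^2-1)},\qquad \phi(t):=\frac{\sqrt{t}}{\Psi(t)}.$$
A direct differentiation shows $\phi'(t)>0$ for all $t\ge 0$ and $\phi([0,\infty))=[0,1/(A\sqrt{\kappa_1^2-1}))$. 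Hypotheses \eqref{eq:conditionatx_0} and \eqref{eq:initialconditiondelta0} are precisely designed so that at $(x_0,\delta_0)$ the right-hand side is strictly less than $1/(A\sqrt{\kappa_1^2-1})$; by continuity the same holds on an open neighborhood of $(x_0,\delta_0)$ in $(x,u)$-space. Inverting $\phi$ there and substituting back into \eqref{eq:secondformpde} gives the equivalent first-order system
$$Du=F(x,u),\qquad F(x,u):=\alpha(|Sx|^2,u)\,Sx,$$
with $\alpha$ of class $C^1$ in its arguments; since $v<0$, the scalar $\alpha$ is negative, so $Du$ is anti-parallel to $Sx$.

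For the second step, any $C^2$ solution must satisfy $\partial_{x_j}F_i(x,u(x))=\partial_{x_i}F_j(x,u(x))$. After replacing each occurrence of $u_{x_k}$ by $F_k$, the $\alpha\,\alpha_u S_iS_j$ cross terms cancel by symmetry and one is left with
$$\alpha\,[(S_i)_{x_j}-(S_j)_{x_i}]\;+\;\alpha_{|Sx|^2}\,[(|Sx|^2)_{x_j}S_i-(|Sx|^2)_{x_i}S_j]\;=\;0.$$
The first bracket vanishes by \eqref{firstcurlconditionforsolution}; the second, up to sign, is a component of $S\times\nabla|Sx|^2$, which vanishes by \eqref{eq:secondcurlconditionforsolution}. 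Hence the system is Frobenius-compatible.

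For the final step, I would invoke the standard compatible-system existence theorem, carried out constructively as follows. Solve the scalar ODE $\partial_{x_1}u=F_1(x_1,x_0^2,u)$ along $x_2=x_0^2$ with $u(x_0)=\delta_0$, obtaining $u$ on a small $x_1$-interval around $x_0^1$. For each such $x_1$, solve $\partial_{x_2}u=F_2(x_1,x_2,u)$ in $x_2$ with the value produced in the previous sub-step as the initial datum at $x_2=x_0^2$. The compatibility verified in the second step implies that $\partial_{x_1}u=F_1$ continues to hold off the initial segment, so $Du=F(x,u)$ on the full rectangle $U$ around $x_0$, and $u\in C^2(U)$ because $F$ is $C^1$. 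Uniqueness is inherited from the uniqueness in the two ODE steps. The bound \eqref{eq:inequalityinu} holds at $x_0$ by \eqref{eq:initialconditiondelta0}, and continuity of $u$ allows us to shrink $U$ so that it holds throughout. The main technical point to watch is the bookkeeping: $\alpha$ is defined only where the argument of $\phi^{-1}$ stays in $[0,1/(A\sqrt{\kappa_1^2-1}))$, so $U$ must be chosen small enough that $(|Sx|,u(x))$ remains in this admissible set along both ODE flows; this is guaranteed by the strict inequality built into \eqref{eq:conditionatx_0} together with continuity.
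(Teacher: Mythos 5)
Your proposal is correct and follows essentially the same route as the paper: both convert \eqref{eq:secondformpde} into an explicit compatible system $Du=F(x,u)$ with $F$ anti-parallel to $Sx$, identify the Frobenius compatibility condition with \eqref{firstcurlconditionforsolution} and \eqref{eq:secondcurlconditionforsolution} via the cancellation of the $S_iS_j$ cross terms, and invoke the local existence theorem for compatible total differential equations (the paper cites Hartman; you re-derive it by successive ODE integration). The only substantive difference is mechanical: the paper inverts the modulus relation by solving a quadratic for $\sqrt{\kappa_1^2(\kappa_1-\kappa_2)^2+|Dv|^2}$ and must then verify that the chosen root back-substitutes into the original equation, whereas you invert the strictly increasing $\phi(t)=\sqrt{t}/\Psi(t)$ directly, which makes the equivalence with \eqref{eq:secondformpde} automatic and matches the role of \eqref{eq:conditionatx_0} and \eqref{eq:initialconditiondelta0} as keeping the data inside the range of $\phi$.
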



\begin{proof}
We rewrite \eqref{eq:secondformpde} in the following form
{\small
\[
\dfrac{\left(u(x)+\dfrac{C}{1-\kappa_1\kappa_2}\right)(\kappa_1-\kappa_2)\sqrt{\kappa_1^2-1}}{\sqrt{\kappa_1^2(\kappa_1-\kappa_2)^2+(\kappa_1-\kappa_2)^2(\kappa_1^2-1)|Du(x)|^2}+\kappa_1(1-\kappa_1\kappa_2)}Du(x)(\kappa_1-\kappa_2)\sqrt{\kappa_1^2-1}=\dfrac{\kappa_1(\kappa_1-\kappa_2)^2}{(1-\kappa_1\kappa_2)}Sx.
\]
}
Let $v(x)=\left(u(x)+\dfrac{C}{1-\kappa_1\kappa_2}\right)(\kappa_1-\kappa_2)\sqrt{\kappa_1^2-1}$, $\bar Sx=\dfrac{\kappa_1(\kappa_1-\kappa_2)^2}{1-\kappa_1\kappa_2}Sx$. 
Then to find $u$ solving \eqref{eq:secondformpde}, satisfying \eqref{eq:inequalityinu}, with $u(x_0)=\delta_0$ is equivalent to find $v$ satisfying 
\begin{align}
\dfrac{v(x)Dv(x)}{\sqrt{\kappa_1^2(\kappa_1-\kappa_2)^2+|Dv(x)|^2}+\kappa_1(1-\kappa_1\kappa_2)}&=\bar Sx\label{eq:pdeforv},\\
\dfrac{C}{1-\kappa_1\kappa_2}(\kappa_1-\kappa_2)\sqrt{\kappa_1^2-1}<&v(x)<0 \label{eq:inequalityinv},
\end{align}
with $v(x_0)=\left(\delta_0+\dfrac{C}{1-\kappa_1\kappa_2}\right)(\kappa_1-\kappa_2)\sqrt{\kappa_1^2-1}:=z_0$.


Assume this $v$ exists. Taking absolute values in \eqref{eq:pdeforv} yields
\begin{equation}\label{eq:conditionSandv}
|\bar Sx|<|v(x)|=-v(x),
\end{equation}
and taking absolute values in \eqref{eq:pdeforv} again, and squaring both sides, we obtain that
\begin{equation}\label{eq:intermediaryresult}
v^2(x)\,|Dv(x)|^2=\left(\sqrt{\kappa_1^2(\kappa_1-\kappa_2)^2+|Dv(x)|^2}+\kappa_1(1-\kappa_1\kappa_2)\right)^2|\bar Sx|^2.
\end{equation}
Let $t(x):=\sqrt{\kappa_1^2(\kappa_1-\kappa_2)^2+|Dv(x)|^2}$, so
$|Dv(x)|^2=t^2(x)-\kappa_1^2(\kappa_1-\kappa_2)^2$. 
Replacing in \eqref{eq:intermediaryresult} yields
\[
v^2(x)(t^2(x)-\kappa_1^2(\kappa_1-\kappa_2)^2)=\left(t(x)+\kappa_1(1-\kappa_1\kappa_2)\right)^2|\bar Sx|^2.
\] 
Therefore $t(x)$ satisfies the quadratic equation
\begin{equation}\label{eq:quadraticfort}
(v^2(x)-|\bar Sx|^2)\,X^2-\left[2\kappa_1(1-\kappa_1\kappa_2)|\bar Sx|^2\right]\,X-\kappa_1^2\left[(\kappa_1-\kappa_2)^2v^2(x)+(1-\kappa_1\kappa_2)^2|\bar Sx|^2\right]=0.
\end{equation}
From \eqref{eq:conditionSandv} the constant coefficient is negative and  the quadratic coefficient is positive,
hence \eqref{eq:quadraticfort} has two solutions with opposite signs.
The discriminant $\Delta$ of \eqref{eq:quadraticfort} is
$$
\Delta=4\kappa_1^2v^2(x)\left[(\kappa_1-\kappa_2)^2v^2(x)-|\bar Sx|^2(1-\kappa_2^2)(\kappa_1^2-1)\right]>0.
$$
Since $t(x)>0$, we obtain
\begin{equation}\label{eq:formulafort}
t(x)=\bar t(x):=\dfrac{\kappa_1(1-\kappa_1\kappa_2)|\bar Sx|^2+\kappa_1|v(x)|\sqrt{(\kappa_1-\kappa_2)^2v^2(x)-|\bar Sx|^2(1-\kappa_2^2)(\kappa_1^2-1)}}{v^2(x)-|\bar Sx|^2},
\end{equation}
and therefore $t(x)$ can be written as a function of $v$ and $|\bar S|$.
Therefore equation \eqref{eq:pdeforv} can be written in the following way
\begin{equation}\label{eq:quasilinear}
Dv(x)=F(x,v(x)),
\end{equation}
with 
{\small
\begin{align}\label{eq:formulaforF}
F(x,v(x))&=\dfrac{\bar Sx}{v(x)}\left(\dfrac{\kappa_1(1-\kappa_1\kappa_2)|\bar Sx|^2+\kappa_1|v(x)|\sqrt{(\kappa_1-\kappa_2)^2v^2(x)-|\bar Sx|^2(1-\kappa_2^2)(\kappa_1^2-1)}}{v^2(x)-|\bar Sx|^2}+\kappa_1(1-\kappa_1\kappa_2)\right)\\
&=G\left(\dfrac{\bar Sx}{v(x)}\right),\notag
\end{align}
where 
\begin{equation}\label{eq:Formulaforg}
G(x)=\left(\dfrac{\kappa_1(1-\kappa_1\kappa_2)|x|^2+\kappa_1\sqrt{(\kappa_1-\kappa_2)^2-(1-\kappa_2^2)(\kappa_1^2-1)|x|^2}}{1-|x|^2}+\kappa_1(1-\kappa_1\kappa_2)\right)x:=\phi(|x|^2)x,
\end{equation}
}
for $|x|<1$.

Set 
$
F(x,z)=G\left(\dfrac{\bar Sx}{z}\right)=\phi\left(\dfrac{|\bar Sx|^2}{z^2}\right) \dfrac{\bar Sx}{z}=\left(G_1\left(\dfrac{\bar Sx}{z}\right),
G_2\left(\dfrac{\bar Sx}{z}\right)\right):=(F_1(x,z),F_2(x,z))$;
$\bar Sx=(\bar S_1x, \bar S_2x)$.

We have shown that if $v$ solves \eqref{eq:pdeforv} and \eqref{eq:inequalityinv}, then $v$ solves \eqref{eq:quasilinear}.

Conversely, we will solve \eqref{eq:quasilinear} and from this will obtain the solution to \eqref{eq:pdeforv} and \eqref{eq:inequalityinv}.
To this purpose we use a result from \cite[Chapter 6, pp. 117-118]{hartman-book-odes}. That is, if we assume that
\begin{equation}\label{eq:Hartmancondition}
\dfrac{\partial F_1}{\partial x_2}(x,z)+\dfrac{\partial F_1}{\partial z}(x,z)F_2(x,z)=\dfrac{\partial F_2}{\partial x_1}(x,z)+\dfrac{\partial F_2}{\partial z}(x,z)F_1(x,z)
\end{equation}
holds for all $(x,z)$ in a open set  $O$, then for each $(x_0,z_0)\in O$ there exists a neighborhood $V$ of the point $x_0$ and a unique solution $v$ to \eqref{eq:quasilinear} in $V$ satisfying $v(x_0)=z_0$. 
In our case, \eqref{eq:Hartmancondition} follows from the assumptions 
\eqref{firstcurlconditionforsolution} and
\eqref{eq:secondcurlconditionforsolution}. We postpone this verification for later (we will actually prove that \eqref{eq:Hartmancondition} is equivalent to \eqref{firstcurlconditionforsolution} and
\eqref{eq:secondcurlconditionforsolution}).

We then show that using 
Hartman's result, solving \eqref{eq:quasilinear} with an appropriate initial condition, furnishes the solution of \eqref{eq:pdeforv} and \eqref{eq:inequalityinv}.
In fact, as a consequence of \eqref{eq:conditionatx_0} we can pick $z_0<0$ satisfying
\begin{equation}\label{eq:choiceofz_0}
|\bar Sx_0|<|z_0|<\dfrac{-C}{1-\kappa_1\kappa_2}(\kappa_1-\kappa_2)\sqrt{\kappa_1^2-1}.
\end{equation}
Notice that \eqref{eq:choiceofz_0} holds if and only if \eqref{eq:initialconditiondelta0} holds for $\delta_0=\dfrac{z_0}{(\kappa_1-\kappa_2)\sqrt{\kappa_1^2-1}}-\dfrac{C}{1-\kappa_1\kappa_2}$.
Then with this choice of $z_0$, the solution $v(x)$ to \eqref{eq:quasilinear} with $v(x_0)=z_0$ satisfies \eqref{eq:inequalityinv} and \eqref{eq:conditionSandv} in a neighborhood $V'$ of $x_0$.
It remains to show that this solution $v$ satisfies the original equation \eqref{eq:pdeforv} in a neighborhood of $x_0$.
From the definition of $\bar t$ in \eqref{eq:formulafort} and \eqref{eq:formulaforF} we have
$F(x,v(x))=\dfrac{\bar Sx}{v(x)}\left(\bar t(x)+\kappa_1(1-\kappa_1\kappa_2) \right)$. Since $\bar t(x)$ solves \eqref{eq:quadraticfort}, we get
$v^2(x)(\bar t^2(x)-\kappa_1^2(\kappa_1-\kappa_2)^2)=\left(\bar t(x)+\kappa_1(1-\kappa_1\kappa_2)\right)^2|\bar Sx|^2$.
Therefore from \eqref{eq:quasilinear},
$|Dv(x)|^2=\bar t^2(x)-\kappa_1^2(\kappa_1-\kappa_2)^2$. 
Since $\bar t(x)>0$, we get that $\bar t(x)=t(x)$ and so equation \eqref{eq:quasilinear} is \eqref{eq:pdeforv} for $x\in V'$.

It remains to verify \eqref{eq:Hartmancondition}. From \eqref{eq:choiceofz_0} and by continuity, there is a neighborhood $O$ of the point $(x_0,z_0)$ such that $|\bar S x|<|z|$ for all $(x,z)\in O$ with $z<0$.
%
For all points $(x,z)\in O$ we have by calculation 
\begin{align*}
\dfrac{\partial F_1}{\partial x_2} (x,z)&=\dfrac{1}{z}DG_1\cdot \left(\dfrac{\partial \bar S_1}{\partial x_2}, \dfrac{\partial \bar S_2}{\partial x_2}\right)\qquad \dfrac{\partial F_2}{\partial x_1} (x,z)=\dfrac{1}{z}DG_2\cdot \left(\dfrac{\partial \bar S_1}{\partial x_1}, \dfrac{\partial \bar S_2}{\partial x_1}\right);\\
\dfrac{\partial F_1}{\partial z}(x,z)&=-\dfrac{1}{z^2}DG_1\cdot \bar S\qquad \qquad \hskip .5cm
\dfrac{\partial F_2}{\partial z}(x,z)=-\dfrac{1}{z^2}DG_2\cdot \bar S,
\end{align*}
where $DG_1$ and $DG_2$ are evaluated at $\dfrac{\bar Sx}{z}$; and $\bar S, \dfrac{\partial \bar S_i}{\partial x_j}$ are evaluated at $x$.
Moreover, from \eqref{eq:Formulaforg}, we have that
\begin{align*}
DG_1(x)&=\left(\phi\left(|x|^2\right)+2x_1^2\phi'\left(|x|^2\right), 2x_1x_2\phi'\left(|x|^2\right)\right),\\
DG_2(x)&=\left(2x_1x_2\phi'\left(|x|^2\right),\phi\left(|x|^2\right)+2x_2^2\phi'\left(|x|^2\right)\right).
\end{align*}
This implies that $(DG_1(x)\cdot x)G_2(x)=(DG_2(x)\cdot x)G_1(x)$,
which gives that $\dfrac{\partial F_1}{\partial z} F_2= \dfrac{\partial F_2}{\partial z} F_1$.
In addition, we obtain the identity for $(x,z)\in O$
{\small
\begin{align}\label{eq:simplifiedformula}
&\dfrac{\partial F_1}{\partial x_2}(x,z)-\dfrac{\partial F_2}{\partial x_1}(x,z)\notag
\\
&=\dfrac{1}{z}\phi\left(\dfrac{|\bar Sx|^2}{z^2}\right)\left(\dfrac{\partial \bar S_1}{\partial x_2}-\dfrac{\partial \bar S_2}{\partial x_1}\right)+
\dfrac{2}{z^3}\phi'\left(\dfrac{|\bar Sx|^2)}{z^2}\right)\left((\bar S_1x)^2\dfrac{\partial \bar S_1}{\partial x_2}- (\bar S_2x)^2\dfrac{\partial \bar S_2}{\partial x_1}+(\bar S_1x)(\bar S_2x)\left(\dfrac{\partial \bar S_2}{\partial x_2}-\dfrac{\partial \bar S_1}{\partial x_1}\right)\right)\\
&=
\dfrac{1}{z}\phi\left(\dfrac{|\bar Sx|^2}{z^2}\right)\,I(x)+
\dfrac{2}{z^3}\phi'\left(\dfrac{|\bar Sx|^2)}{z^2}\right)\,J(x),\notag
\end{align}
}
where 
\begin{align*}
I(x)&=\dfrac{\partial \bar S_1}{\partial x_2}-\dfrac{\partial \bar S_2}{\partial x_1}=-\text{curl } \bar S\\
J(x)&=(\bar S_1x)^2\dfrac{\partial \bar S_1}{\partial x_2}-(\bar S_2x)^2\dfrac{\partial \bar S_2}{\partial x_1}+(\bar S_1x)(\bar S_2x)\left(\dfrac{\partial \bar S_2}{\partial x_2}-\dfrac{\partial \bar S_1}{\partial x_1}\right)
=\dfrac12 \,\bar S\times D|\bar S|^2.
\end{align*}
Notice that \eqref{eq:Hartmancondition} holds if and only if \eqref{eq:simplifiedformula} equals zero.
Now from assumptions \eqref{firstcurlconditionforsolution} and \eqref{eq:secondcurlconditionforsolution}, and since $\bar S$ is a constant multiple of $S$, we then obtain \eqref{eq:Hartmancondition} for $x$ in a neighborhood of $x_0$.

Conversely, we shall prove that if $|\bar Sx_0|<|z_0|$ with $z_0<0$ and \eqref{eq:Hartmancondition} holds in a neighborhood $O$ of $(x_0,z_0)$ with $|\bar S x|<|z|$ in $O$ and $z<0$, then this implies that \eqref{firstcurlconditionforsolution} and \eqref{eq:secondcurlconditionforsolution} hold in a neighborhood of $x_0$.
From \eqref{eq:Formulaforg}
\begin{align*}
\phi(r)
&=\kappa_1\left(\dfrac{(1-\kappa_1\kappa_2)r+\Delta(r)}{1-r}+(1-\kappa_1\kappa_2)\right),
\end{align*}
$0\leq r<1$, where we have set $\Delta(r)=\sqrt{(\kappa_1-\kappa_2)^2-(1-\kappa_2^2)(\kappa_1^2-1)r}$.
Differentiating with respect to $r$ we obtain
$$
\phi'(r)=\dfrac{2\kappa_1(1-\kappa_1\kappa_2)\Delta(r)-\kappa_1(1-\kappa_2^2)(\kappa_1^2-1)(1+r)+2\kappa_1(\kappa_1-\kappa_2)^2 }{2(1-r)^2\Delta(r)}.
$$
Let $r=\dfrac{|\bar Sx|^2}{z^2}$ with $(x,z)\in O$. Since \eqref{eq:simplifiedformula} equals zero, we have
\begin{equation}\label{eq:multipliedequation}
z^2\phi\left(r\right)I(x)+2\phi'\left(r\right)J(x)=0.
\end{equation}
Will multiply this equation by $\dfrac{1}{\kappa_1}(1-r)^2 \Delta(r)$.
Notice first that 
\begin{align*}
\dfrac{1}{\kappa_1}(1-r)^2 \Delta(r)\phi\left(r\right)
&=
(1-\kappa_1\kappa_2)r(1-r)\Delta(r)+(1-r)\Delta(r)^2+(1-\kappa_1\kappa_2)(1-r)^2\Delta(r)\\
&=
(1-\kappa_1\kappa_2)(1-r)\Delta(r)+(1-r)\Delta(r)^2
\end{align*}
and
\[
\dfrac{1}{\kappa_1}(1-r)^2 \Delta(r)2\phi'\left(r\right)
=
2(1-\kappa_1\kappa_2)\Delta(r)-(1-\kappa_2^2)(\kappa_1^2-1)(1+r)+2(\kappa_1-\kappa_2)^2.
\]
Hence \eqref{eq:multipliedequation} can be written as
\begin{align*}
0&=z^2\,\left((1-\kappa_1\kappa_2)(1-r)\Delta(r)+(1-r)\Delta(r)^2\right)\,I(x)\\
&\qquad +
\left(2(1-\kappa_1\kappa_2)\Delta(r)-(1-\kappa_2^2)(\kappa_1^2-1)(1+r)+2(\kappa_1-\kappa_2)^2 \right)\,J(x)\\
&=
\left(z^2(1-\kappa_1\kappa_2)(1-r)\,I(x)+2(1-\kappa_1\kappa_2)\,J(x)\right)\,\Delta(r)\\
&\qquad +
z^2(1-r)\Delta(r)^2 I(x)+\left(-(1-\kappa_2^2)(\kappa_1^2-1)(1+r)+2(\kappa_1-\kappa_2)^2\right)J(x).
\end{align*}
This implies that
\begin{align*}
\left(z^2(1-\kappa_1\kappa_2)(1-r)\,I(x)+2(1-\kappa_1\kappa_2)\,J(x)\right)&\,\Delta(r)=\\
&z^2(r-1)\Delta(r)^2 I(x)+\left((1-\kappa_2^2)(\kappa_1^2-1)(1+r)-2(\kappa_1-\kappa_2)^2\right)J(x).
\end{align*}
Squaring both sides of this identity yields
\begin{align*}
\left(z^2(1-\kappa_1\kappa_2)(1-r)\,I(x)+2(1-\kappa_1\kappa_2)\,J(x)\right)^2\,&\Delta(r)^2
\\
&=\left( z^2(r-1)\Delta(r)^2 I(x)+\left((1-\kappa_2^2)(\kappa_1^2-1)(1+r)-2(\kappa_1-\kappa_2)^2\right)J(x)\right)^2,
\end{align*}
that is,
\begin{align*}
&\left(z^2(1-\kappa_1\kappa_2)(1-r)\,I(x)+2(1-\kappa_1\kappa_2)\,J(x)\right)^2\, \left((\kappa_1-\kappa_2)^2-(1-\kappa_2^2)(\kappa_1^2-1)r \right)\\
&=
\left( z^2(r-1)\left((\kappa_1-\kappa_2)^2-(1-\kappa_2^2)(\kappa_1^2-1)r \right) I(x)+\left((1-\kappa_2^2)(\kappa_1^2-1)(1+r)-2(\kappa_1-\kappa_2)^2\right)J(x)\right)^2.
\end{align*}
Substituting $r=|\bar Sx|^2/z^2$ yields
\begin{align*}
\left((1-\kappa_1\kappa_2)(z^2-|\bar Sx|^2)\,I(x)+2(1-\kappa_1\kappa_2)\,J(x)\right)^2&\,\left((\kappa_1-\kappa_2)^2\,z^2-(1-\kappa_2^2)(\kappa_1^2-1) \,|\bar Sx|^2\right)\dfrac{1}{z^2}=\\
&
\left[ (|\bar Sx|^2-z^2)\left((\kappa_1-\kappa_2)^2\,z^2-(1-\kappa_2^2)(\kappa_1^2-1) \,|\bar Sx|^2\right)\dfrac{1}{z^2} I(x)\right.\\
&\left.\qquad +\left((1-\kappa_2^2)(\kappa_1^2-1)\left(z^2+|\bar Sx|^2\right)-2(\kappa_1-\kappa_2)^2\,z^2\right)\dfrac{1}{z^2}J(x)\right]^2.
\end{align*}
Multiplying by $z^4$ yields
\begin{align}\label{eq:polynomial1}
z^2\,\left((1-\kappa_1\kappa_2)(z^2-|\bar Sx|^2)\,I(x)+2(1-\kappa_1\kappa_2)\,J(x)\right)^2&\,\left((\kappa_1-\kappa_2)^2\,z^2-(1-\kappa_2^2)(\kappa_1^2-1) \,|\bar Sx|^2\right)=\\
&
\left[ (|\bar Sx|^2-z^2)\left((\kappa_1-\kappa_2)^2\,z^2-(1-\kappa_2^2)(\kappa_1^2-1) \,|\bar Sx|^2\right) I(x)\right.\notag\\
&\left.\qquad \qquad+\left((1-\kappa_2^2)(\kappa_1^2-1)\left(z^2+|\bar Sx|^2\right)-2(\kappa_1-\kappa_2)^2\,z^2\right)J(x)\right]^2.\notag
\end{align}
Both sides of the last identity are polynomials of 8th degree in $z$ with coefficients that depend on $x$. Comparing the coefficients of $z^8$ in \eqref{eq:polynomial1}, we get that
\[
(1-\kappa_1\kappa_2)^2(\kappa_1-\kappa_2)^2\, I^2(x)=(\kappa_1-\kappa_2)^4\,I^2(x).
\]
Since $(1-\kappa_1\kappa_2)^2-(\kappa_1-\kappa_2)^2=(1-\kappa_1^2)(1-\kappa_2^2)\neq 0$, it follows that $I(x)=0$ in a neighborhood of $x_0$
Thus replacing $I(x)=0$ in \eqref{eq:polynomial1} yields
\begin{align}\label{eq:polynomial2}
\left(2\,z\, (1-\kappa_1\kappa_2)\,J(x)\right)^2&\left(\left(\kappa_1-\kappa_2\right)^2\,z^2-(1-\kappa_2^2)(\kappa_1^2-1)|\bar Sx|^2\right)=\\
&\left[\left((1-\kappa_2^2)(\kappa_1^2-1)\left(z^2+|\bar S x|^2\right)-2\left(\kappa_1-\kappa_2\right)^2z^2\right)J(x)\right]^2. \notag
\end{align}
Now both sides of \eqref{eq:polynomial2} are polynomials of 4th degree in $z$, and comparing the coefficients of $z^4$, we get that
\begin{equation}\label{eq:identitypolynomial2}
\left[2(1-\kappa_1\kappa_2)(\kappa_1-\kappa_2)\right]^2 J^2(x) =\left[(1-\kappa_2^2)(\kappa_1^2-1)-2(\kappa_1-\kappa_2)^2\right]^2 J^2(x).
\end{equation}
The coefficients of $J^2(x)$ in \eqref{eq:identitypolynomial2} are different since 
\[
\left[2(1-\kappa_1\kappa_2)(\kappa_1-\kappa_2)\right]^2-\left[(1-\kappa_2^2)(\kappa_1^2-1)-2(\kappa_1-\kappa_2)^2\right]^2=- (1-\kappa_1^2)^2\,(1-\kappa_2^2)^2\neq 0.
\]
Therefore $J(x)=0$ in a neighborhood of $x_0$.
Since $\bar S$ is a constant multiple of $S$, we obtain \eqref{firstcurlconditionforsolution} and \eqref{eq:secondcurlconditionforsolution}. This completes the proof of the theorem.

\end{proof}

\subsection{Remarks on conditions \eqref{firstcurlconditionforsolution} and \eqref{eq:secondcurlconditionforsolution}}\label{rmk:condtionsonS}

Condition \eqref{firstcurlconditionforsolution} is equivalent to the existence of function $w$ such that $S=(w_{x_1},w_{x_2})$. Therefore \eqref{eq:secondcurlconditionforsolution} is equivalent to the following quasi linear equation
\begin{equation}\label{eq:quasilinearpde}
w_{x_1x_2}\left((w_{x_1})^2-(w_{x_2})^2\right)+w_{x_1}\,w_{x_2}
\left(w_{x_2x_2}-w_{x_1x_1}\right)=0.
\end{equation}
Using Cauchy-Kowalevski's theorem, the Cauchy problem problem for this equation can be solved for a large class of initial data, \cite[Chapter 2]{pdebood:fritzjohn}.
In fact, if we are given two analytic curves $\gamma(s)=(f(s),g(s))$ and $\Gamma(s)=(\phi(s),\psi(s))$ satisfying the non characteristic condition
\[
\det
\left[\begin{matrix}
f' & g' & 0\\
0 & f' & g'\\
-\phi\,\psi & \phi^2-\psi^2 & \phi\,\psi
\end{matrix}
\right]\neq 0,
\]  
and $z$ satisfies the compatibility condition $z'=\phi\,f'+\psi\,g'$, then there exists a unique solution $w$ solving \eqref{eq:quasilinearpde} locally and satisfying the initial conditions 
$w(f(s),g(s))=z(s)$, $w_{x_1}(f(s),g(s))=\phi(s)$ and $w_{x_2}(f(s),g(s))=\psi(s)$.
In particular, we can construct transformations $Tx$ such that $Sx=Tx-x$ satisfy \eqref{firstcurlconditionforsolution} and \eqref{eq:secondcurlconditionforsolution} and map the curve $\gamma$ into the curve $\Gamma$.

Notice that $S$ verifies condition \eqref{eq:conditionsamemedium}, when $S$ satisfies \eqref{firstcurlconditionforsolution} and \eqref{eq:secondcurlconditionforsolution}. 

\subsection{Solution with a different method in dimension two for a magnification}\label{sec:legendretransform} 
We present in this section a different method to find $u$ when $T$ is $Tx=(1+\alpha)x$, $\alpha\neq 0$.
When the dimension is two, 
equation \eqref{eq:secondformpde} becomes
\begin{equation}\label{eq:pdeugeneral}
\dfrac{(1-\kappa_1\kappa_2)u(x)+C}{(\kappa_1^2-\kappa_1\kappa_2)\sqrt{\kappa_1^2+(\kappa_1^2-1)|u'(x)|^2}+\kappa_1^2(1-\kappa_1\kappa_2)}u'(x)=\dfrac{\alpha x}{\kappa_1^2-1}.
\end{equation}
We use the Legendre transform to change variables in \eqref{eq:pdeugeneral}; we set
$$w(\xi)=-u(x)+x\xi,\,\,\,  u'(x)=\xi, \,\,\,  w'(\xi)=x.$$
Making this change of variables in \eqref{eq:pdeugeneral} yields the ode
$$\left[(1-\kappa_1\kappa_2)(w'(\xi)\xi-w(\xi))+C\right]\xi=\dfrac{\alpha}{\kappa_1^2-1} \left[(\kappa_1^2-\kappa_1\kappa_2)\sqrt{\kappa_1^2+(\kappa_1^2-1)\xi^2}+\kappa_1^2(1-\kappa_1\kappa_2)\right]w'(\xi),
$$
which setting $r(\xi)=(1-\kappa_1\kappa_2)\xi^2-\dfrac{\alpha}{\kappa_1^2-1}\left[(\kappa_1^2-\kappa_1\kappa_2)\sqrt{\kappa_1^2+(\kappa_1^2-1)\xi^2}+\kappa_1^2(1-\kappa_1\kappa_2)\right]$ can be written as
\[
r(\xi)w'(\xi)+(\kappa_1\kappa_2-1)\xi\,w(\xi)=-C\,\xi.
\]
If $\alpha<0$, then $r(\xi)>0$ for all $\xi$. On the other hand, if $\alpha>0$, then there is $\xi_1> 0$ such that $r(\pm \xi_1)=0$.
That is, $w$ satisfies a linear ode of form $w'(\xi)+P(\xi)w(\xi)=Q(\xi)$ with
\begin{equation}\label{eq:formulaforP}
P(\xi)=- \dfrac{(1-\kappa_1\kappa_2)\xi}{r(\xi)},\qquad 
Q(\xi)=-\dfrac{C\xi}{r(\xi)},
\end{equation}
on the intervals $(-\infty,-\xi_1), (-\xi_1,\xi_1),(\xi_1,+\infty)$.
Note that $Q(\xi)=\dfrac{C\,P(\xi)}{1-\kappa_1\kappa_2}$ then 
\begin{align*}
0=w'(\xi)+P(\xi)w(\xi)-\dfrac{C}{1-\kappa_1\kappa_2}P(\xi)
=\left(w-\dfrac{C}{1-\kappa_1\kappa_2}\right)'+P(\xi)\left(w-\dfrac{C}{1-\kappa_1\kappa_2}\right).
\end{align*}
Hence 
\begin{equation}\label{eq:legendresolution}
w(\xi)-\dfrac{C}{1-\kappa_1\kappa_2}=\dfrac{A}{e^{\int P(\xi)d\xi}}.
\end{equation}
Differentiating $w$ twice we obtain 
\begin{equation}\label{eq:formulaforw''bis}
w''(\xi)=Ae^{-\int P(\xi)d\xi} (P(\xi)^2-P'(\xi)).\\
\end{equation}
We have
$
r'(\xi)=2(1-\kappa_1\kappa_2)\xi -\dfrac{\alpha(\kappa_1^2-\kappa_1\kappa_2)\xi}{\sqrt{\kappa_1^2+(\kappa_1^2-1)\xi^2}},
$
and then from \eqref{eq:formulaforP}
\begin{align*}
P(\xi)^2-P'(\xi)&=\dfrac{(1-\kappa_1\kappa_2)^2\xi^2}{r^2(\xi)}+\dfrac{(1-\kappa_1\kappa_2)(r(\xi)-\xi r'(\xi))}{r^2(\xi)}\\
&=\dfrac{-\alpha(1-\kappa_1\kappa_2)\kappa_1^2}{r^2(\xi)(\kappa_1^2-1)\sqrt{\kappa_1^2+(\kappa_1^2-1)\xi^2}}\left[\kappa_1^2-\kappa_1\kappa_2+(1-\kappa_1\kappa_2)\sqrt{\kappa_1^2+(\kappa_1^2-1)\xi^2}\right].
\end{align*}
The term between brackets is positive since $\kappa_1\kappa_2<1.$
Since $\alpha\neq 0$, then the function $P^2(\xi)-P'(\xi)$ is never zero for $\xi\neq \pm \xi_1$ and has the sign of $\alpha (\kappa_1\kappa_2-1)$. Therefore from \eqref{eq:formulaforw''bis}, $w''$ is never zero for $\xi\neq \pm \xi_1$ and has the sign of $A\,\alpha\,(\kappa_1\kappa_2-1)$. 
We then obtain that $w'$ is strictly monotone and hence it can be inverted, i.e., $\xi=(w')^{-1}(x)$. We conclude that the solution $u$ to \eqref{eq:pdeugeneral} is given by $u(x)=x\,(w')^{-1}(x)-w((w')^{-1}(x))$.

\setcounter{equation}{0}
\section{Solution of the Imaging Problem When $n_1<n_3$}\label{sec:n3larger}
By reversibility of the optical paths this case reduces to the case when $\kappa_1\kappa_2<1$ from Section \ref{sec:n3smaller} by switching the roles of $n_3$ and $n_1$. 
Notice that $\kappa_1\kappa_2>1$ if and only if $n_3>n_1$.
Let $T:\Omega\to \Omega^*$ be a bijection, then $T^{-1}:\Omega^*\to \Omega$.
By reversibility of the optical paths a lens sandwiched between the surfaces $\sigma_1$ and $\sigma_2$ that images $\Omega$ into $T(\Omega)=\Omega^*$ exists if and only if the same lens images $\Omega^*$ into $T^{-1}(\Omega^*)=\Omega$. In the second case this means that the refractive index next to $\Omega^*$ is $n_3$ which is  
bigger that $n_1$ the refractive index next to $\Omega$, this means we are in the case from Section \ref{sec:n3smaller} with $n_3$ and $n_1$ switched. Therefore from Theorem \ref{eq:existensolutionpde}
a sufficient condition for the  local existence of the lens is that the components of the map $T^{-1}$ verify \eqref{firstcurlconditionforsolution} and \eqref{eq:secondcurlconditionforsolution}.

\setcounter{equation}{0}
\section{Solution of the far field and the near field problems for reflection}\label{sec:reflectioncase}
We briefly indicate how to extend the results obtained for reflectors.
\subsection{Snell's law of reflection}
If a ray with unit direction $x$ strikes a reflective surface $\Gamma$ at a point $P$, then  this ray is reflected with unit direction $m$ such that 
$$x\times \nu =m\times \nu,$$ where $\nu$ is a unit normal at $P$. We obtain then that
\begin{equation}\label{eq:snelllawreflection}
x-m=2(x\cdot \nu)\nu.
\end{equation}

\subsection{Far Field Problem for reflection}
Consider the problem analyzed in Section \ref{sec:farfield}, now for reflection, with $\sigma_1$ a given $C^2$ reflective surface given by the graph of a function $u$. Rays emanate from $(x,0)$ with $x\in \Omega$ and with unit direction $e(x)$ as in Section \ref{sec:farfield}. Then proceeding similarly as in Section \ref{sec:farfield}, we obtain the analogue of Theorem \ref{thm:existenceofupperlenssurfaces} for reflection.
That is, we obtain that a surface $\sigma_2$ exists, such that rays reflected by $\sigma_1$, with direction $m_1(x)$, are then reflected by $\sigma_2$ into a fixed unit direction $w$, if and only if $e'(x)=\nabla h(x)$ for some $C^2$ function $h$, and $m_1(x)\cdot w<1$. In this case, $\sigma_2$ is $C^2$ and is parametrized by the vector
$f(x)=\left(\varphi(x),u(\varphi(x))\right)+d(x)m_1(x)$
with
\begin{align}
m_1(x)&=e(x)-2(e(x)\cdot \nu_1)\nu_1,\label{eq:definitionofm1forreflection}\\
d(x)&=\dfrac{C-h(x)+e(x)\cdot (x,0)-(e(x)-w)\cdot (\varphi(x),u(\varphi(x)))}{1-w\cdot m_1(x)},\label{eq:definitionofdforreflection}
\end{align}
where $\nu_1$ is the unit normal to $\sigma_1$ at $(\varphi(x),u(\varphi(x))$.

Moreover, as shown in Example \ref{sec:Pointsource}, using the above analysis one can deduce the far field case considered in \cite[Section 3]{gutierrez-sabra:designofpairsofreflectors} where rays are assumed to be emitted from a point source.

\subsection{Near Field Imaging Problem}
We obtain the following analogue of Theorem \ref{thm:resultwhensameindex} for reflection.
\begin{theorem}\label{thm:nearfieldreflector}
Let $T$ be a $C^1$ bijective map between the set $\Omega$ and a plane domain $\Omega^*$, and $a>0$. Then there exists a system of two mirrors $(\sigma_1,\sigma_2)$ reflecting all rays emitted from $(x,0), \, x\in \Omega$, with direction $e_3=(0,0,1)$ into the point $(Tx,a)$ such that rays leave the lens with direction $e_3$ if and only if
$$\text{\rm curl } S=0,$$
where $Sx=Tx-x$.
In this case, the lower face of the lens $\sigma_1$ is parametrized by $(x,u(x))$ where $u$ is given by \eqref{eq:solutionreflection}, and the top face $\sigma_2$ of the lens is parametrized by the vector $f(x)=(x,u(x))+d(x)m_1(x)$, with $d$ and $m_1$ given by \eqref{eq:formulafordreflection} and \eqref{eq:m1forreflection}, respectively. Both surfaces $\sigma_1$ and $\sigma_2$ are $C^2$.
\end{theorem}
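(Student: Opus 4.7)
My plan is to mimic the proof of Theorem \ref{thm:resultwhensameindex} but to exploit the much simpler Snell's law of reflection \eqref{eq:snelllawreflection} and the far-field reflection analogue of Theorem \ref{thm:existenceofupperlenssurfaces} stated in the paragraph containing \eqref{eq:definitionofm1forreflection}--\eqref{eq:definitionofdforreflection}. As in the refraction case, parametrize $\sigma_1$ as the graph $(x, u(x))$ of an unknown $u \in C^2(\Omega)$. With $e(x) = w = e_3$, $\varphi(x) = x$, and $h$ constant (since $e' \equiv 0$), the far-field reflection result guarantees that once $u$ is fixed, the second mirror $\sigma_2$ parametrized by $f(x) = (x, u(x)) + d(x)\, m_1(x)$, with $m_1, d$ given by \eqref{eq:definitionofm1forreflection}--\eqref{eq:definitionofdforreflection}, automatically reflects $m_1(x)$ into $e_3$. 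Hence the only remaining requirement is the imaging constraint: the exit point must lie vertically above $Tx$, i.e., $(f_1(x), f_2(x)) = Tx$.

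To make this concrete, first compute $m_1$ from \eqref{eq:snelllawreflection} at $\sigma_1$: with $\nu_1 = (-Du, 1)/\sqrt{1+|Du|^2}$, a direct calculation gives $m_1(x) = \bigl(\frac{2Du(x)}{1+|Du(x)|^2},\, \frac{|Du(x)|^2 - 1}{1+|Du(x)|^2}\bigr)$, which is the formula \eqref{eq:m1forreflection}. Next, with $e = w = e_3$, $e \cdot (x,0) = 0$, and $e - w = 0$, formula \eqref{eq:definitionofdforreflection} collapses to $d(x) = C/(1 - e_3 \cdot m_1(x)) = C(1+|Du(x)|^2)/2$, giving \eqref{eq:formulafordreflection}; positivity $d > 0$ forces $C > 0$. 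Imposing $(f_1(x), f_2(x)) = Tx$, i.e., $d(x)\cdot \frac{2Du(x)}{1+|Du(x)|^2} = Sx$, and substituting the expression for $d$, the system collapses dramatically to the linear gradient equation $C\, Du(x) = Sx$.

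On a simply connected domain $\Omega$, this equation has a $C^2$ solution if and only if the field $S/C$ is closed, that is, iff $\text{curl } S = 0$. When this holds, $u$ is recovered by line integration $u(x) = u(x_0) + \frac{1}{C}\int_\gamma S\cdot dr$ along any path $\gamma$ from $x_0$ to $x$ in $\Omega$, yielding \eqref{eq:solutionreflection}. For the regularity claim: $T \in C^1$ gives $S \in C^1$, so $Du \in C^1$ and $u \in C^2$; hence $\nu_1, m_1, d$ are $C^1$ and $f$ is a $C^1$ parametrization of $\sigma_2$. To promote $\sigma_2$ to $C^2$, observe that \eqref{eq:snelllawreflection} at $\sigma_2$ makes $m_1 - e_3$ parallel to $\nu_2$, and $|m_1 - e_3|^2 = 2(1 - m_1\cdot e_3) = 4/(1+|Du|^2)$ stays bounded away from zero, so $\nu_2 = \pm(m_1 - e_3)/|m_1 - e_3|$ is $C^1$.

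I anticipate no substantial obstacle: the reflection case bypasses the algebraic complications that forced the extra condition \eqref{eq:conditionsamemedium} in the refraction setting, since here the factor multiplying $Du$ reduces to an exact constant after all simplifications. The only genuine bookkeeping point is verifying the algebraic collapse $d(x)\cdot m_1'(x) = C\, Du(x)$ and ensuring consistency between positivity $d > 0$, the sign of $C$, and the choice of normal orientation at $\sigma_1$ (the reflection law \eqref{eq:snelllawreflection} being invariant under $\nu_1 \mapsto -\nu_1$, so the sign conventions can be fixed without loss of generality).
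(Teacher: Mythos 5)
Your proposal is correct and follows essentially the same route as the paper: specialize the far-field reflection formulas \eqref{eq:definitionofm1forreflection}--\eqref{eq:definitionofdforreflection} to $e=w=e_3$, observe that the imaging constraint $(f_1,f_2)=Tx$ collapses to $C\,Du=Sx$, and integrate this exact gradient equation, with solvability equivalent to $\text{curl}\,S=0$. The extra details you supply (the sign of $C$, the explicit cancellation $d(x)\,\frac{2Du}{1+|Du|^2}=C\,Du$, and the $C^1$ regularity of $\nu_2$) are all consistent with the paper's argument.
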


\begin{proof}
In this case from \eqref{eq:definitionofdforreflection} and \eqref{eq:definitionofm1forreflection}
\begin{equation}\label{eq:formulafordreflection}
d(x)= \dfrac{C}{1-m_1(x)\cdot e_3},
\end{equation}
and 
\begin{equation}\label{eq:m1forreflection}
m_1(x)=(0,0,1)-2\left((0,0,1)\cdot \dfrac{(-Du(x),1)}{\sqrt{1+|Du(x)|^2}}\right) \dfrac{(-Du(x),1)}{\sqrt{1+|Du(x)|^2}}
=\left(\dfrac{2 Du(x)}{1+|Du(x)|^2},1-\dfrac{2}{1+|Du(x)|^2}\right).
\end{equation}

As in Section \ref{sec:nearfield}, we have $Tx= (f_1(x),f_2(x))$, where $f(x)=\left(x,u(x)\right)+d(x)\,m_1(x)$, and hence
\begin{align*}
Tx
&=x+ C\, Du(x).
\end{align*}
Therefore $u$ satisfies the pde
\begin{equation}\label{eq:pdereflection}
Du(x)=\dfrac{Sx}{C},
\end{equation}
where $Sx=Tx-x$.
If $x,x_0\in \Omega$, with $\Omega$ simply connected, then one obtains
\begin{equation}\label{eq:solutionreflection}
u(x)=u(x_0)+\dfrac{1}{C}\int_{\mathcal C} Sx\cdot dr
\end{equation}
 if and only if $\text{curl} (Sx)=0$, where the integral is a line integral along an arbitrary path $\mathcal C$ from $x_0$ to $x$.
 \end{proof}

\section{Conclusion}
We have shown an explicit method to design a lens that refracts a given variable unit field of directions $e(x)$ into a prescribed direction $w$. 
For the lens to exist, we show that the field must satisfy a curl-zero condition and a compatibility condition with $w$.
Using this analysis we solve an imaging problem consisting in finding a lens that focuses two plane images $A$ and $B$, one into another. We find sufficient conditions on the map $T:A\to B$, $B=T(A)$, that guarantee the existence of the lens. These conditions depend on the relationships between the refractive indices. Similar problems for reflectors are solved.
The methods used consist in solving first order systems of pdes, and we illustrate the results with several examples.

%

\end{document}